\newcommand{\C}{\mathbb{C}}
\newcommand{\Z}{\mathbb{Z}}
\newcommand{\Q}{\mathbb{Q}}
\newcommand{\Spec}{\operatorname{Spec}}
\newcommand{\Ker}{\operatorname{Ker}}
\newcommand{\dg}{{\rm div}}
\newcommand{\ld}{{\rm LD}}
\newcommand{\trdeg}{{\rm tr.deg}\:}
\newtheorem{thm}{Theorem}[section]
\newtheorem{prop}[thm]{Proposition}
\newtheorem{lem}[thm]{Lemma}
\newtheorem{example}[thm]{Example}
\begin{document}
\title[Closed polynomials and their applications]{Closed polynomials and their applications for computations of kernels of monomial derivations}
\author{Chiaki Kitazawa}
\address[C. Kitazawa]{Graduate School of Science and Technology, Niigata University, 8050 Ikarashininocho, Nishi-ku, Niigata 950-2181, Japan}
\email{f16a050k@alumni.niigata-u.ac.jp}
\author{Hideo Kojima}
\address[H. Kojima]{Department of Mathematics, Faculty of Science, Niigata University, 8050 Ikarashininocho, Nishi-ku, Niigata 950-2181, Japan}
\email{kojima@math.sc.niigata-u.ac.jp}
\author{Takanori Nagamine}
\address[T. Nagamine]{Graduate School of Science and Technology, Niigata University, 8050 Ikarashininocho, Nishi-ku, Niigata 950-2181, Japan}
\email{t.nagamine14@m.sc.niigata-u.ac.jp}
\date{July 24, 2018}
\subjclass[2010]{Primary 13N15; Secondary 13A50, 13B25.}
\keywords{Closed polynomial; Derivation; Darboux polynomial.}
\thanks{Research of the second author was partially supported by Grant-in-Aid for Scientific Research (C) (No.\ 17K05198) from Japan Society for the Promotion of Science}
\thanks{Research of the third author was partially supported by Grant-in-Aid for JSPS Fellows  (No.\ 18J10420) from Japan Society for the Promotion of Science}

\begin{abstract}
In this paper, we give some results on closed polynomials and factorially closed polynomial in $n$ variables which are generalizations of results in \cite{KN15}, \cite{N16} and \cite{N18}. In particular, we give a characterization of factorially closed polynomials in $n$ variables over an algebraically closed field for any characteristic. 
Furthermore, as an application of results on closed polynomials, we determine kernels of non-zero monomial derivations on the polynomial ring in two variables over a UFD. Finally, by using this result and the argument in \cite[\S 5]{NZ06}, for a field $k$, we determine the non-zero monomial derivations $D$ on $k[x,y]$ such that the quotient field of the kernel of $D$ is not equal to the kernel of $D$ in $k(x,y)$.  
\end{abstract}
\maketitle

\setcounter{section}{-1}

\section{Introduction} 
Let $k[{\rm X}]$ be the polynomial ring in $n$ variables over a field $k$ of characteristic zero and let $k({\rm X})$ be its quotient field. For a $k$-derivation $D$ on $k[{\rm X}]$, we denote its kernel by $k[{\rm X}]^D$. The $k$-derivation $D$ is naturally extended to a $k$-derivation on $k({\rm X})$, which is denoted by the same notation $D$, and its kernel is denoted by $k({\rm X})^D$. The study of derivations on polynomial rings and their kernels have been motivated in various areas of mathematics. Kernels of $k$-derivations in $k[{\rm X}]$ and $k({\rm X})$ have been studied by many mathematicians. See, e.g., \cite{N94}, \cite{PM190}, \cite{EMS136} for recent excellent accounts.
It is well-known that the kernel of any $k$-derivation on $k[{\rm X}]$ with $n \leq 3$ is finitely generated as a $k$-algebra and that the kernel of any non-zero $k$-derivation on $k[{\rm X}]$ with $n = 2$ can be expressed as $k[f]$ for some $f \in k[{\rm X}]^D$, which are originally given in \cite{NN88}. Note also, the fact holds true in the case where $k$ is a UFD of characteristic zero and $n = 2$ (see \cite[Corollary 3.2]{EK04}). However, it is difficult to determine the generator of $k[{\rm X}]^D$ of some $k$-derivation $D$ on $k[{\rm X}]$ even if $k[{\rm X}]^D$ is finitely generated as a $k$-algebra.  

On the other hand, the second and third authors studied closed polynomials in \cite{KN15}, \cite{N16} and \cite{N18}. Here, a non-constant polynomial $f \in k[{\bf X}] \setminus k$ is a \emph{closed polynomial} if the ring $k[f]$ is integrally closed in $k[{\rm X}]$. Of cause, closed polynomials are define by the same way in the case where $k$ is an integral domain (see Section 1). It is well known that the kernel of a derivation $D$ on $k[{\bf X}]$ is integrally closed in $k[{\bf X}]$. In particular, if $\trdeg_kk[{\bf X}]^D = 1$, then it is generated by a closed polynomial. Thus, closed polynomials may be useful to determine the generator of $k[{\rm X}]^D$. 

In section 1, we recall some kinds of derivations and some concepts; coordinates, closed polynomials and factorially closed polynomials over an integral domain. 
In section 2, we give some results on closed polynomials and factorially closed polynomials which are generalizations of results in \cite{KN15}, \cite{N16} and \cite{N18}. Moreover, in Example \ref{ex:2-7}, we show that Danielewski surface and Koras-Russell threefold are factorially closed polynomials, but they are not coordinates. 
In section 3, as an application of results on closed polynomials in section 2, we study kernels of monomial derivations on the polynomial ring in two variables over a UFD. This is in Theorem \ref{thm:3-3}. Also, in Lemma \ref{lem:3-1}, we give a way to find polynomials which are vanished by a given derivation. 
Finally, in section 4, by using the argument in \cite[\S 5]{NZ06} and Theorem \ref{thm:3-3}, we determine the non-zero monomial derivations $D$ on $k[x,y]$ such that the quotient field of the kernel of $D$ is not equal to the kernel of $D$ in $k(x,y)$. 

\section{Preliminaries}  
Let $R$ be an integral domain and let $R[{\rm X}] = R[x_1, \ldots, x_n]$ be the polynomial ring in $n$ variables $x_1, \ldots, x_n$ over $R$. We denote $Q(R)$ by the quotient field of $R$. For ${\bf w} = (w_1, \ldots, w_n) \in (\Z_{\geq 0})^n$, we define a degree function on $R[{\rm X}]$ by the map 
	\begin{center}
	  $\deg_{\bf w} : R[{\rm X}] \to \Z_{\geq 0} \cup \{ -\infty \}$
	\end{center}
defined by $\deg_{\bf w} x_i = w_i$ for $1 \leq i \leq n$ and $\deg_{\bf w} 0 = -\infty$. For $(1, \ldots, 1) \in (\Z_{\geq 0})^n$, we denote simply $\deg f := \deg_{(1, \ldots, 1)} f $. If $\deg_{\bf w} f \geq 2$, 
then we denote by $\ld_{\bf w}(f)$ the smallest positive prime number dividing $\deg_{\bf w} f$. For example, if $\deg_{\bf w} f$ is a prime number, then $\ld_{\bf w}(f)$ = $\deg_{\bf w} f$.
A non-constant polynomial $f \in R[{\rm X}] \setminus R$ is a \emph{closed polynomial} if the ring $R[f]$ is integrally closed in $R[{\rm X}]$. For a polynomial $f \in R[{\rm X}]$, we define 
	\begin{center}
	  $\hat{f} := {\rm gcd}(f_{x_1}, \ldots, f_{x_n})$, 
	\end{center}
where $f_{x_i}$ is the partial derivative of $f$ with respect to $x_i$ and we take the greatest common divisor of $f_{x_1}, \ldots, f_{x_n}$ as a polynomial in $Q(R) \otimes_RR[{\rm X}]$.  
A polynomial $f \in R[{\rm X}]$ is called a \emph{coordinate} if there exist polynomials $f_2, ..., f_n \in R[{\rm X}]$ such that $k[f, f_2, ..., f_n] = R[{\rm X}]$. 

Let $B$ be an $R$-algebra. For an $R$-derivation $D$ on $B$, we say that $D$ is \emph{irreducible} if the only principal ideal of $B$ containing the image of $D$ is $B$ itself. We say that $D$ is \emph{locally nilpotent} if for any $f \in B$, there exists $m \geq 0$ such that $D^m(f) = 0$. We denote also the kernel of an $R$ derivation by $B^D$. An $R$-subalgebra $A \subset B$ is \emph{factorially closed} in $B$ if for all $f, g \in B \setminus \{ 0 \}$, $fg \in A$ implies that $f$ and $g$ belong to $A$. If $R[f]$ is factorially closed in $R[{\rm X}]$, then we call $f \in R[{\rm X}]$ a \emph{factorially closed polynomial}. We can see easily that the following holds true. 
\begin{lem} \label{lem:1-1}
{\rm (cf.\ \cite[Proposition 2.4]{N18})}
Let $f \in R[{\rm X}] \setminus R$. Then the following assertions hold true. 
	\begin{enumerate}
	  \item
	  If $f$ is a coordinate, then it is a factorially closed polynomial. 
	  \item
	  If $f$ is a factorially closed polynomial, then it is a closed polynomial. 
	\end{enumerate}
\end{lem}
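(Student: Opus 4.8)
The plan is to treat (1) and (2) separately, each reducing to a short and essentially formal argument.

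For (1), suppose $f$ is a coordinate, so that $R[f, f_2, \ldots, f_n] = R[{\rm X}]$ for suitable $f_2, \ldots, f_n \in R[{\rm X}]$. First I would verify that $f, f_2, \ldots, f_n$ are algebraically independent over $R$: since $Q(R)(x_1, \ldots, x_n) = Q(R)(f, f_2, \ldots, f_n)$ has transcendence degree $n$ over $Q(R)$, these $n$ generators cannot satisfy a nontrivial polynomial relation, so the natural $R$-algebra surjection from a polynomial ring in $n$ new variables onto $R[{\rm X}]$ has trivial kernel and is an isomorphism. Thus $R[{\rm X}] = R[f][f_2, \ldots, f_n]$ is genuinely a polynomial ring in $f_2, \ldots, f_n$ over the domain $R[f]$. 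Now introduce the degree function $\delta$ on $R[{\rm X}]$ given by total degree in $f_2, \ldots, f_n$ (with $R[f]$ sitting in degree $0$). Since $R[{\rm X}]$ is a domain graded by $\delta$, the leading $\delta$-forms of two nonzero elements have nonzero product, whence $\delta(gh) = \delta(g) + \delta(h)$ for all nonzero $g, h$. If $gh \in R[f]$ then $\delta(gh) = 0$, and as $\delta(g), \delta(h) \geq 0$ we conclude $\delta(g) = \delta(h) = 0$, i.e.\ $g, h \in R[f]$. Hence $R[f]$ is factorially closed.

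For (2), I would prove the more general statement that a factorially closed $R$-subalgebra $A$ of an integral domain $B$ is integrally closed in $B$; applying it to $A = R[f]$ and $B = R[{\rm X}]$ yields the claim. Let $b \in B$ be integral over $A$ and choose a monic relation $b^m + a_{m-1}b^{m-1} + \cdots + a_1 b + a_0 = 0$ with $a_i \in A$ of minimal degree $m \geq 1$. If $m = 1$, then $b = -a_0 \in A$. If $m \geq 2$, rewrite the relation as $b\,(b^{m-1} + a_{m-1}b^{m-2} + \cdots + a_1) = -a_0$. When $a_0 \neq 0$, both factors on the left are nonzero (the second cannot vanish, else $a_0 = 0$), and their product $-a_0$ lies in $A$, so factorial closedness forces $b \in A$. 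When $a_0 = 0$, we may cancel $b$ in the domain $B$ (assuming $b \neq 0$, as otherwise $b \in A$ trivially) to obtain a monic relation of degree $m-1$, contradicting minimality. In every case $b \in A$.

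The individual steps are routine; the one point demanding genuine care is the opening of (1), namely verifying that the generators of a coordinate are algebraically independent so that $R[{\rm X}]$ really is a polynomial ring in $f, f_2, \ldots, f_n$. This is exactly where the hypothesis that $R$ is an integral domain is used, through the transcendence-degree count, and the multiplicativity $\delta(gh) = \delta(g) + \delta(h)$ likewise relies on $R[f]$ being a domain. Part (2) is then a purely formal consequence of the definition of factorial closedness, the only bookkeeping being the separate treatment of the constant term $a_0$; I expect no real obstacle there.
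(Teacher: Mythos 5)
Your proof is correct and complete. The paper itself offers no argument for this lemma (it is stated as ``easy'' with a citation to \cite[Proposition 2.4]{N18}), so there is nothing to compare against line by line; your two arguments are exactly the standard ones that the citation refers to. In (1), the key points --- that the generators of a coordinate system are algebraically independent (via the transcendence-degree count over $Q(R)$), so that $R[{\rm X}]$ is genuinely a polynomial ring over $R[f]$, and that total degree in $f_2,\ldots,f_n$ is then additive because $R[f]$ is a domain --- are all handled properly. In (2), your minimal-degree integral relation argument, with the separate cases $a_0 \neq 0$ (apply factorial closedness to $b \cdot (b^{m-1} + \cdots + a_1) = -a_0$) and $a_0 = 0$ (cancel $b$ in the domain $B$ to contradict minimality), is exactly the standard proof that a factorially closed subalgebra of a domain is integrally closed in it, and it correctly specializes to $A = R[f]$, $B = R[{\rm X}]$.
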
 
\noindent 
Later, in Examples \ref{ex:2-3} and \ref{ex:2-7}, we give some examples of such polynomials. 

A polynomial $f \in R[{\bf X}]$ is called an \emph{integral element} or \emph{a Darboux polynomial} for an $R$-derivation $D$ if $D(f) \in fR[{\bf X}]$, that is, $D(f) = hf$ for some $h \in R[{\bf X}]$. We define an abelian monoid $X_{D}$ by 
	\begin{center}
 	  $X_{D} = \{ h \in R[{\bf X}] \ | \ D(f) = hf$ for some $f \in R[{\bf X}] \setminus \{ 0 \} \}$. 
	\end{center}

We often use the following result to verify whether a given polynomial is closed or not.  
\begin{thm} \label{thm:N16} 
{\rm (cf.\ \cite[Proposition 3.11]{N16})}
Let $R$ be an integral domain of characteristic zero and let $R[{\rm X}] = R[x_1, \ldots, x_n]$ be the polynomial ring in $n$ variables over $R$. Let $f \in R[{\rm X}] \setminus R$ such that $Q(R)[f] \cap R[{\rm X}] = R[f]$. Assume that there exists an element ${\bf w} \in (\Z_{\geq 0})^n$ satisfying one of the following two conditions{\rm :} 
	\begin{enumerate}
	  \item[{\rm (a)}]
	  $\deg_{\bf w} f = 1$, 
	  \item[{\rm (b)}]
	  $\deg_{\bf w} f \geq 2$ and $\displaystyle \deg_{\bf w} \hat{f} < \frac{\ld_{\bf w}(f) - 1}{\ld_{\bf w}(f)} \deg_{\bf w} f$.
	\end{enumerate}
Then $f$ is a closed polynomial.
\end{thm}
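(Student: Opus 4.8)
The plan is to reduce everything to the case where the base ring is the field $K := Q(R)$ and then argue by contraposition, using the structure theory of closed polynomials over a field of characteristic zero. First I would exploit the hypothesis $Q(R)[f] \cap R[{\rm X}] = R[f]$ to pass to the field case: if $g \in R[{\rm X}]$ is integral over $R[f]$, then $g$ is a fortiori integral over $K[f]$, so provided $K[f]$ is integrally closed in $K[{\rm X}]$ we obtain $g \in K[f] \cap R[{\rm X}] = Q(R)[f] \cap R[{\rm X}] = R[f]$, which is exactly what closedness of $f$ over $R$ requires. Hence it suffices to prove that $f$ is a closed polynomial over $K$. Note that both $\deg_{\bf w}$ and $\hat{f}$ are unchanged under the extension $R \hookrightarrow K$ (indeed $\hat{f}$ is defined in $Q(R)[{\rm X}]=K[{\rm X}]$ from the outset), so conditions (a) and (b) hold verbatim over $K$.

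Next I would argue by contradiction over $K$. Assume $K[f]$ is not integrally closed in $K[{\rm X}]$. By the structure theorem for closed polynomials over a field of characteristic zero, the integral closure of $K[f]$ in $K[{\rm X}]$ has the form $K[g]$ for a closed polynomial $g$, and $f = \rho(g)$ for some $\rho \in K[T]$; non-closedness of $f$ means $K[f] \subsetneq K[g]$, which forces $d := \deg_T \rho \geq 2$ with $g$ non-constant. Comparing leading ${\bf w}$-forms, if $\deg_{\bf w} g = 0$ then $\deg_{\bf w}\rho(g) \leq 0$, while if $\deg_{\bf w} g \geq 1$ then $\deg_{\bf w} f = d\,\deg_{\bf w} g$; since $\deg_{\bf w} f \geq 1$ in both cases (a) and (b), the first alternative is impossible, so $\deg_{\bf w} g \geq 1$ and $\deg_{\bf w} f = d\,\deg_{\bf w} g$. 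In case (a) this already gives $1 = \deg_{\bf w} f = d\,\deg_{\bf w} g \geq 2$, a contradiction, so $f$ is closed.

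For case (b) I would compute $\hat{f}$ by the chain rule. From $f_{x_i} = \rho'(g)\,g_{x_i}$ and the fact that $K[{\rm X}]$ is a UFD one gets $\hat{f} = \rho'(g)\,\hat{g}$ up to a unit, where $\rho'(g) \neq 0$ because $\ch K = 0$ and $d \geq 2$. Using $\deg_{\bf w} g \geq 1$ and additivity of $\deg_{\bf w}$ on products, $\deg_{\bf w}\hat{f} = (d-1)\deg_{\bf w} g + \deg_{\bf w}\hat{g} \geq (d-1)\deg_{\bf w} g = \frac{d-1}{d}\deg_{\bf w} f$. Finally, the smallest prime $p$ dividing $d$ also divides $\deg_{\bf w} f = d\,\deg_{\bf w} g$, so $\ld_{\bf w}(f) \leq p \leq d$; since $t \mapsto (t-1)/t$ is increasing, this yields $\deg_{\bf w}\hat{f} \geq \frac{\ld_{\bf w}(f)-1}{\ld_{\bf w}(f)}\deg_{\bf w} f$, contradicting (b). I expect the main obstacle to be the correct invocation of the structure theorem (writing a non-closed $f$ as $\rho(g)$ with $g$ closed and $\deg_T\rho\geq 2$) together with the careful justification of the gcd identity $\hat{f} = \rho'(g)\,\hat{g}$ and of $\deg_{\bf w}(\rho'(g)) = (d-1)\deg_{\bf w} g$, which hinges on $\deg_{\bf w} g \geq 1$; once these are secured, the remaining estimates are elementary.
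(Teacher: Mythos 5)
The first thing to note is that the paper itself contains no proof of Theorem \ref{thm:N16}: it is imported (``cf.'') from \cite[Proposition 3.11]{N16}, so there is no internal argument to compare yours against; the comparison can only be with the literature. Judged on its own merits, your proof is correct, and it is essentially the standard argument for this kind of degree criterion. The reduction to $K := Q(R)$ is exactly right: if $g \in R[{\rm X}]$ is integral over $R[f]$ and $K[f]$ is integrally closed in $K[{\rm X}]$, then $g \in K[f] \cap R[{\rm X}] = R[f]$, and conditions (a), (b) are insensitive to the extension because $\hat{f}$ is, by the paper's own definition, computed in $Q(R) \otimes_R R[{\rm X}] = K[{\rm X}]$. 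Over $K$ the bookkeeping is sound: $\deg_{\bf w}$ is additive on products; $\deg_{\bf w} g = 0$ is excluded since it would force $\deg_{\bf w} f \leq 0$; hence $\deg_{\bf w} f = d \deg_{\bf w} g$ with $d = \deg_T \rho \geq 2$, which kills case (a); and in case (b) the identity $\hat{f} = \rho'(g)\, \hat{g}$ up to units (valid in the UFD $K[{\rm X}]$, with $\rho'(g) \neq 0$ because $\ch K = 0$) gives $\deg_{\bf w} \hat{f} \geq (d-1)\deg_{\bf w} g = \frac{d-1}{d}\deg_{\bf w} f \geq \frac{\ld_{\bf w}(f)-1}{\ld_{\bf w}(f)}\deg_{\bf w} f$, using $\ld_{\bf w}(f) \leq d$ and the monotonicity of $t \mapsto (t-1)/t$, contradicting (b).

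The one ingredient you must still secure is the one you flagged yourself: the claim that a non-closed $f \in K[{\rm X}] \setminus K$ can be written as $f = \rho(g)$ with $g$ closed and $\deg_T \rho \geq 2$. This is a genuine theorem, not a formality. It amounts to the statement that the integral closure of $K[f]$ in $K[{\rm X}]$ is again of the form $K[g]$, which rests on Noether's finiteness of integral closure (so that this closure is an affine Dedekind domain) together with Zaks' theorem that a Dedekind subring of $K[{\rm X}]$ containing $K$ is a polynomial ring in one variable. Alternatively, within the toolkit this paper already cites, one can take $K[g]$ maximal among the subrings $K[h]$ containing $K[f]$ (such a maximal element exists because total degrees at least halve along any strictly ascending chain $K[f] \subsetneq K[h_1] \subsetneq K[h_2] \subsetneq \cdots$) and then invoke the maximality characterization of closed polynomials in \cite[Theorem 3.1]{KN15}. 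With that citation supplied, your proof is complete, and it is, as far as one can reconstruct, the same argument as in \cite{N16}.
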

For a non-constant polynomial $f \in R[{\bf X}] \setminus R$, we can verify whether the condition ``$Q(R)[f] \cap R[{\rm X}] = R[f]$" is satisfied or not by using the following lemma. 
\begin{lem} \label{lem:1-3} 
Let $R$ be a UFD. For a non-constant polynomial $f \in R[{\rm X}] \setminus R$, we denote $c(f) \in R$ by the greatest common divisor of the coefficients of $f$. Then the following two conditions are equivalent to each other{\rm :} 
	\begin{enumerate}
	  \item
	  $c(f - f(0, \ldots, 0)) \in R^*$. 
	  \item
	  $Q(R)[f] \cap R[{\rm X}] = R[f]$. 
	\end{enumerate}   
\end{lem}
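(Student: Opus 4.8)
The plan is to reduce at once to the case where $f$ has vanishing constant term and then treat the two implications separately, the harder one resting on Gauss's lemma in $R[{\rm X}]$. First I would observe that both conditions are unchanged if we replace $f$ by $g := f - f(0,\ldots,0)$: since $g(0,\ldots,0) = 0$ we get $c(g - g(0,\ldots,0)) = c(g) = c(f - f(0,\ldots,0))$, so condition (1) is literally the same, and since $f(0,\ldots,0) \in R$ we have $R[f] = R[g]$ and $Q(R)[f] = Q(R)[g]$, so condition (2) is the same as well. Hence I may assume $f(0,\ldots,0) = 0$, in which case (1) reads simply that $f$ is primitive, i.e. $c(f) \in R^*$. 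I also use throughout that the inclusion $R[f] \subseteq Q(R)[f] \cap R[{\rm X}]$ always holds, so only the reverse inclusion is at issue in (2).

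For the implication (2) $\Rightarrow$ (1) I would argue by contraposition. If $c(f) = c$ is a non-unit, write $f = c f_0$ with $f_0 \in R[{\rm X}]$ primitive; then $f_0(0,\ldots,0) = 0$ and $f_0 = (1/c)f \in Q(R)[f] \cap R[{\rm X}]$. I claim $f_0 \notin R[f]$. Comparing total degrees, $\deg f_0 = \deg f$, so any expression $f_0 = \sum_i r_i f^i$ with $r_i \in R$ must have $r_i = 0$ for $i \geq 2$; the constant term (zero for both $f_0$ and $f$) forces $r_0 = 0$, and then $(1 - r_1 c)f_0 = 0$ in the domain $R[{\rm X}]$ gives $r_1 c = 1$, contradicting that $c$ is a non-unit. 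Thus $f_0$ witnesses $Q(R)[f] \cap R[{\rm X}] \neq R[f]$.

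The substantive implication is (1) $\Rightarrow$ (2), carried out by a content-peeling argument. Suppose $P(f) \in R[{\rm X}]$ for some $P(T) = \sum_{i=0}^d q_i T^i \in Q(R)[T]$; clearing denominators, write $q_i = a_i/b$ with $b \in R \setminus \{0\}$ and $a_i \in R$, so that $\sum_{i=0}^d a_i f^i \in bR[{\rm X}]$. I would prove $b \mid a_i$ for all $i$ by induction on $d$. Since $f(0,\ldots,0)=0$, every $f^i$ with $i \geq 1$ has vanishing constant term, so the constant term of $\sum a_i f^i$ is $a_0$, whence $b \mid a_0$; subtracting $a_0$ and factoring out $f$ yields $f \cdot \bigl(\sum_{i=1}^d a_i f^{i-1}\bigr) \in bR[{\rm X}]$, and the induction hypothesis then applies to $\sum_{i=1}^d a_i f^{i-1}$ once we know this polynomial lies in $bR[{\rm X}]$. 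Consequently each $q_i = a_i/b \in R$, so $P(f) \in R[f]$, establishing $Q(R)[f] \cap R[{\rm X}] \subseteq R[f]$.

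The delicate point, which I expect to be the main obstacle, is precisely the passage from $f h \in bR[{\rm X}]$ to $h \in bR[{\rm X}]$ used in that inductive step. This is where primitivity of $f$ (condition (1)) is essential: by Gauss's lemma over the UFD $R[{\rm X}]$ one has $c(f h) = c(f)c(h)$ up to a unit, and since $f$ is primitive this gives $c(fh) = c(h)$ up to a unit; as membership in $bR[{\rm X}]$ is equivalent to divisibility of the content by $b$, the hypothesis $b \mid c(fh)$ forces $b \mid c(h)$, i.e. $h \in bR[{\rm X}]$. Everything else is bookkeeping with constant terms and degrees, so the whole argument hinges on this single multiplicativity-of-content input.
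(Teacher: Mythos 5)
Your proof is correct and takes essentially the same route as the paper's: after the identical reduction to $f(0,\ldots,0)=0$, your content-peeling induction for (1) $\Rightarrow$ (2) is the paper's argument (the paper peels coefficients one at a time using a common denominator $r$ in place of your $b$, invoking $c(gh)=c(g)c(h)$ exactly as you do), and for (2) $\Rightarrow$ (1) both proofs use $f/c(f)$ as the witness. The only cosmetic difference is that the paper concludes (2) $\Rightarrow$ (1) by applying the already-proven forward implication to $f/c(f)$, whereas you verify directly by a degree and constant-term comparison that $f/c(f) \notin R[f]$ -- a check the paper's strict inclusion $R[f] \subsetneqq R[f^*]$ implicitly requires anyway.
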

\begin{proof}
Let $K := Q(R)$ and $B := R[{\rm X}]$. Without loss of generality, we may assume that $f(0, \ldots, 0) = 0$. Then $c(f - f(0, \ldots, 0)) = c(f)$. We note also $c(gh) = c(g)c(h)$ for $g, h \in B$. 

{\rm \bf (1) $\Rightarrow$ (2)} 
Suppose that $c(f) \in R^*$. Let $g \in K[f] \cap B$. Then there exist $u_0, u_1, ..., u_m \in K$ such that
	\begin{center}
	  $g = u_0f^m + u_1f^{m - 1} + \cdots + u_{m - 1}f + u_m$. 
	\end{center}   
Since $f(0, \ldots, 0) = 0$ and $g \in B$, we see that $g(0, \ldots, 0) = u_m$ and $u_m \in R$. Now, we choose $r \in R \setminus \{ 0 \}$ with $ru_i \in R$ for $0 \leq i \leq m$. Let $g_1 := r(g - u_m)/f \in B$, namely, $g_1 = \sum_{i = 0}^{m - 1}ru_{m - 1 - i}f^i$. Then $c(g_1) = rc(f)^{-1}c(g - u_m) \in rR$. Hence $g_1 \in rB$, especially, $ru_{m - 1} = g_1(0, ..., 0) \in rR$. This implies $u_{m - 1} \in R$. Next, let $g_2 := r(g - u_{m - 1}f - u_m)/f^2 \in B$. By the same augment, we have $u_{m - 2} \in R$. Using the same augment inductively, we have $u_i \in R$ for $0 \leq i \leq m$, so $g \in R[f]$.   

{\rm \bf (2) $\Rightarrow$ (1)} 
Suppose that $c(f) \notin R^*$. Let $f^* := f/c(f) \in B$. Then $R[f] \subsetneqq R[f^*]$ and $K[f] = K[f^*]$. Since $c(f^*) \in R^*$, by the consequence of {\rm ``(1) $\Rightarrow$ (2)"}, $K[f^*] \cap B = R[f^*]$. Therefore   
	\begin{center}
	  $R[f] \subsetneqq R[f^*] = K[f^*] \cap B = K[f] \cap B = R[f]$. 
	\end{center}
This is a contradiction. 
\end{proof} 

\section{Closed polynomials and related topics}
In this section, we study closed polynomials and related topics. Some results in this section are generalizations for some results of papers written by the second and third authors \cite{KN15}, \cite{N16} and \cite{N18}. Let $R$ be an integral domain and let $R[{\rm X}] = R[x_1, \ldots, x_n]$ be the polynomial ring in $n$ variables $x_1, \ldots, x_n$ over $R$. 
\begin{example} \label{ex:2-1} {\rm (cf.\ \cite[Example 4.2]{N16})}
Let $R$ be an integral domain and $ux_1^{m_1}\cdots x_n^{m_n}$ be a monomial of $R[{\rm X}] = R[x_1, \ldots, x_n]$. Then the following two conditions are equivalent to each other{\rm :}  
	\begin{enumerate}
	  \item
	  $ux_1^{m_1}\cdots x_n^{m_n}$ is a closed polynomial. 
	  \item
	  $u \in R^*$ and ${\rm gcd}(m_1, \ldots, m_n) = 1$. 
	\end{enumerate}
\end{example}
\begin{proof}
{\bf (1) $\Rightarrow$ (2)}  
Let $f = ux_1^{m_1}\cdots x_n^{m_n}$. If $u \notin R^*$, then $x_1^{m_1}\cdots x_n^{m_n} \notin R[f]$, but it is integral over $R[f]$. Thus, $R[f]$ is not integrally closed in $R[{\rm X}]$. Now, we suppose that $u \in R^*$ and $d := {\rm gcd}(m_1, \ldots, m_n) \geq 2$. For $1 \leq i \leq n$, let $l_i = m_i/d \in \Z_{\geq 0}$. Then $u^{-1}f = (x_1^{l_1}\cdots x_n^{l_n})^d$, so $x_1^{l_1}\cdots x_n^{l_n} \notin R[f]$, but it is integral over $R[f]$.  

{\bf (2) $\Rightarrow$ (1)} 
Let $f = ux_1^{m_1}\cdots x_n^{m_n}$ and let ${\bf w} = (1, \ldots, 1)$. Then $f$ is ${\bf w}$-homogeneous. Since $u \in R^*$, we have $Q(R)[f] \cap R[{\rm X}] = R[f]$. Also, ${\rm gcd}(m_1, \ldots, m_n) = 1$ means that $f$ is primitive in $Q(R) \otimes_RR[{\rm X}]$, that is, there are no ${\bf w}$-homogeneous polynomials $g \in Q(R) \otimes_RR[{\rm X}]$ with $f = rg^m$ for some $r \in Q(R) \setminus \{ 0 \}$ and some $m \geq 2$. By \cite[Proposition 3.10]{N16}, $f$ is a closed polynomial. 
\end{proof}

For polynomials $f_1, \ldots, f_n \in R[{\rm X}]$, let $F := (f_1, \ldots, f_n)$. We denote $J(F)$ by the Jacobian matrix of $F$ with respect to variables $x_1, \ldots, x_n$, namely, $J(F) = (\partial f_i/\partial x_j)_{1 \leq i,\: j \leq n}$. The following proposition is a generalization of \cite[Proposition 3.6]{KN15} to the case where the coefficient ring is an integral domain of characteristic zero and $n \geq 1$. 
\begin{prop} \label{prop:2-2} 
Let $R$ be an integral domain of characteristic zero. Let $F := (f_1, \ldots, f_n)$ for polynomials $f_1, \ldots, f_n \in R[{\rm X}]$. If {\rm det}$J(F) \in R \setminus \{ 0 \}$ and $Q(R)[f_i] \cap R[{\rm X}] = R[f_i]$ for $1 \leq i \leq n$, then these polynomials $f_1, \ldots, f_n$ are closed polynomials. In particular, for $g \in R[{\rm X}] \setminus R$ satisfying $Q(R)[g] \cap R[{\rm X}] = R[g]$, if $\hat{g} = {\rm gcd}(g_{x_1}, \ldots, g_{x_n}) \in R \setminus \{ 0 \}$, then it is a closed polynomial.  
\end{prop}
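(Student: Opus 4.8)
The plan is to funnel both assertions through the closedness criterion of Theorem \ref{thm:N16}, applied with the weight ${\bf w} = (1, \ldots, 1)$ (so that $\deg_{\bf w} = \deg$). The single driving observation is that under either hypothesis one has $\deg \hat{f_i} = 0$; once this is in hand, Theorem \ref{thm:N16} closes the argument at once.

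I would first dispatch the final (``in particular'') assertion, which is really the heart of the matter. Let $g \in R[{\rm X}] \setminus R$ with $Q(R)[g] \cap R[{\rm X}] = R[g]$ and $\hat g \in R \setminus \{0\}$, so that $\deg \hat g = 0$. Taking ${\bf w} = (1, \ldots, 1)$: if $\deg g = 1$ then condition (a) of Theorem \ref{thm:N16} holds; if $\deg g \geq 2$ then, since $\ld_{\bf w}(g) \geq 2$ and $\deg g \geq 2$, the quantity $\tfrac{\ld_{\bf w}(g) - 1}{\ld_{\bf w}(g)} \deg g$ is strictly positive, so $0 = \deg \hat g < \tfrac{\ld_{\bf w}(g) - 1}{\ld_{\bf w}(g)} \deg g$ and condition (b) holds. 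In both cases $g$ is a closed polynomial.

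For the main assertion I would deduce $\hat{f_i} \in R \setminus \{0\}$ for each $i$ from $\det J(F) \in R \setminus \{0\}$, and then quote the criterion just proved. Fix $i$. If $f_i \in R$ then the $i$-th row of $J(F)$ vanishes and $\det J(F) = 0$, contrary to hypothesis; hence $f_i \in R[{\rm X}] \setminus R$. Expanding the determinant by cofactors along the $i$-th row gives the polynomial identity $\det J(F) = \sum_{j=1}^n (\partial f_i/\partial x_j)\, C_{ij}$ in $R[{\rm X}]$, where each cofactor $C_{ij} \in R[{\rm X}]$. Passing to $Q(R) \otimes_R R[{\rm X}] = Q(R)[{\rm X}]$, the gcd $\hat{f_i}$ divides every $\partial f_i/\partial x_j$, hence divides the right-hand side, so $\hat{f_i} \mid \det J(F)$. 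As $\det J(F)$ is a nonzero element of $R$, it is a unit of $Q(R)[{\rm X}]$; thus any divisor of it is a unit, giving $\hat{f_i} \in Q(R)^*$ and $\deg \hat{f_i} = 0$. Now $f_i$ satisfies all the hypotheses of the criterion above, so $f_i$ is closed.

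The argument is short, and I do not anticipate a genuine obstacle; the care needed is bookkeeping in $Q(R)[{\rm X}]$. One must recall that $\hat{f_i}$ is a gcd taken in $Q(R)[{\rm X}]$ rather than in $R[{\rm X}]$, that the Laplace expansion is an honest polynomial identity (so the $C_{ij}$ really are available as factors), and that a divisor of the nonzero constant $\det J(F)$ can only be a unit of $Q(R)[{\rm X}]$, which is precisely the statement $\deg \hat{f_i} = 0$. The characteristic-zero hypothesis enters only implicitly, through Theorem \ref{thm:N16}.
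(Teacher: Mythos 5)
Your proposal is correct and follows essentially the same route as the paper: both arguments reduce to showing $\deg \hat{f_i} = 0$ by exhibiting $\hat{f_i}$ as a divisor of the nonzero constant $\det J(F)$ in $Q(R)\otimes_R R[{\rm X}]$ (you via Laplace expansion along a row, the paper via the Leibniz permutation expansion), and then invoke Theorem \ref{thm:N16} with ${\bf w} = (1,\ldots,1)$. Your reorganization --- proving the ``in particular'' criterion first and deducing the main assertion from it --- and your explicit check that $f_i \notin R$ are harmless cosmetic differences.
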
 
\begin{proof}
Suppose that ${\rm det} J(F) \in R \setminus \{ 0 \}$, where $F = (f_1, \ldots, f_n)$ for $f_i \in R[{\rm X}] = R[x_1, \ldots, x_n]$. Then there exist $g_{ij}\in Q(R) \otimes_RR[{\rm X}]$ such that 
	\begin{center}
	$\displaystyle \frac{\partial f_i}{\partial x_j} = g_{ij} \hat{f_i}$ 
	\end{center}
for $1 \leq i, j \leq n$, here, we note that $\hat{f_i}$ is the polynomial defined over $Q(R)$. Then we have
\begin{equation*}
	\begin{split}
	{\rm det} J(F) & = \sum_{\sigma \in S_n} {\rm sgn (\sigma)}\frac{\partial f_1}{\partial x_{\sigma (1)}} \cdots \frac{\partial f_n}{\partial x_{\sigma (n)}} \\ 
	& = \sum_{\sigma \in S_n} {\rm sgn (\sigma)}g_{1 \sigma (1)} \hat{f_1} \cdots g_{n \sigma (n)} \hat{f_n} \\ 
	& = (\hat{f_1} \cdots \hat{f_n}) \cdot \sum_{\sigma \in S_n} {\rm sgn (\sigma)}g_{1 \sigma (1)} \cdots g_{n \sigma (n)},       
	\end{split}
\end{equation*} 
where $S_n$ is the symmetric group on $n$ elements. 
Since ${\rm det} J(F) \in R \setminus \{ 0 \}$, $\hat{f_i} \in Q(R) \setminus \{ 0 \}$, so $\deg \hat{f_i} = 0$ for $1 \leq i \leq n$. Therefore $\hat{f_i}$ satisfies the inequality of Theorem \ref{thm:N16} (b) for ${\bf w} = (1, \ldots, 1)$ if $\deg f_i \geq 2$. Otherwise $\deg f_i = 1$. By Theorem \ref{thm:N16}, $f_i$ is a closed polynomial for $1 \leq i \leq n$. 
\end{proof}

\begin{example} \label{ex:2-3}
{\rm 
Let $R := \C[x]$ and $B := R[y, z, t] = \C[x, y, z, t]$ be the polynomial rings over $\C$. For $n \geq 1$, let $v_n := y + x^n(xz + y(yt + z^2)) \in B$. This is often called an \emph{$n$-th V\'{e}n\'{e}reau polynomial}. If $n \geq 2$, then it is known to be a coordinate over $R$, however, we do not know whether $v_1$ is a coordinate over $R$ or not (see \cite[Example 3.18]{EMS136} and \cite[Corollary 14]{Lew13}). Here, we can show that $v_1$ is a closed polynomial over $R$ (of course, $v_n$ is a closed polynomial for $n \geq 2$). 
}
\end{example}
\begin{proof}
Since $c(v_1 - v_1(0, 0, 0)) = {\rm gcd}(1, x^2, x^1) = 1$, by Lemma \ref{lem:1-3}, $Q(R)[v_1] \cap B = R[v_1]$. Furthermore, $\widehat{v_1} = {\rm gcd}((v_1)_y, (v_1)_z, (v_1)_t) = 1$, so this is a closed polynomial over $R$. In other words, $\C[x, v_1]$ is integrally closed in $B = \C[x, y, z, t]$. 
\end{proof}

The following lemma is a generalization of \cite[Proposition 4.1]{N18} to the case where the coefficient ring is an integral domain of characteristic zero and $n \geq 1$. 
\begin{lem} \label{lem:2-4} 
Let $R$ be an integral domain of characteristic zero. For a non-constant polynomial $f \in R[{\rm X}] \setminus R$, the following conditions are equivalent to each other{\rm :} 
	\begin{enumerate}
		\item
		$\deg \hat{f} = \deg f - 1$. 
		\item
		There exist $r_1, \ldots, r_n \in Q(R)$ with $(r_1, \ldots, r_n) \neq (0, \ldots, 0)$ such that $f \in Q(R)[r_1x_1 + \cdots + r_nx_n]$. 
	\end{enumerate} 
\end{lem}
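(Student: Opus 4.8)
The plan is to reduce the statement to the case of a field and then argue the two implications separately. Since $\hat{f}$ is by definition computed in $Q(R)\otimes_R R[{\rm X}] = K[{\rm X}]$, where $K := Q(R)$, and since condition (2) is a statement about membership in $K[r_1x_1+\cdots+r_nx_n]$, both conditions depend only on $f$ viewed as an element of $K[{\rm X}]$. Thus I may and will work over the field $K$, which has characteristic zero; the total degree $\deg = \deg_{(1,\ldots,1)}$ and the gcd defining $\hat{f}$ are then taken in the UFD $K[{\rm X}]$, so that $\hat{f}$ is well defined up to a nonzero scalar and $\deg\hat{f}$ is unambiguous.

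For the implication (2) $\Rightarrow$ (1), I would write $f = g(\ell)$ with $\ell := r_1x_1+\cdots+r_nx_n \neq 0$ and $g \in K[t]$ non-constant, say of degree $d$; since $\ell$ is linear, $\deg f = d$. The chain rule gives $f_{x_i} = r_i\, g'(\ell)$ for each $i$, so, the $r_i$ being scalars not all zero, $\hat{f} = \gcd(f_{x_1},\ldots,f_{x_n})$ equals $g'(\ell)$ up to a nonzero scalar. As $\operatorname{char} K = 0$ and $d \geq 1$, the derivative $g'$ is nonzero of degree $d-1$, whence $\deg \hat{f} = (d-1)\cdot\deg\ell = d - 1 = \deg f - 1$.

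The substantive direction is (1) $\Rightarrow$ (2). Put $d := \deg f$. The key first step is to observe that each partial derivative has degree at most $d-1 = \deg\hat{f}$ and is divisible by $\hat{f}$; hence $f_{x_i} = c_i\hat{f}$ for some constant $c_i \in K$, for every $i$. Because $\operatorname{char} K = 0$ and $f$ is non-constant, not all $f_{x_i}$ vanish, so the vector $c := (c_1,\ldots,c_n) \in K^n$ is nonzero. The gradient of $f$ is therefore everywhere proportional to the fixed vector $c$, and the remaining task is the classical fact that such an $f$ depends on a single linear form. To make this precise I would choose an invertible linear change of coordinates $x = Ly$ whose columns $L_{\cdot,2},\ldots,L_{\cdot,n}$ form a basis of the hyperplane $\{v \in K^n : c^{\mathsf T}v = 0\}$ and whose first column satisfies $c^{\mathsf T}L_{\cdot,1} \neq 0$. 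Writing $F(y) := f(Ly)$, the chain rule gives $\partial F/\partial y_j = \hat{f}\cdot(c^{\mathsf T}L)_j$, which vanishes for $j \geq 2$ by construction. Since $\operatorname{char} K = 0$, vanishing of $\partial F/\partial y_j$ for all $j \geq 2$ forces $F \in K[y_1]$; translating back via $y = L^{-1}x$ exhibits $f = F(\ell')$ with $\ell' := \sum_j (L^{-1})_{1j}x_j$ a nonzero linear form (nonzero because $L^{-1}$ has no zero row). This yields (2).

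I expect the main obstacle to be this second direction, and within it the passage from ``$\nabla f$ is proportional to a constant vector'' to ``$f$ lies in $K[\ell]$'': one must be careful that the degree equality genuinely forces the cofactors $c_i$ to be \emph{constants} (not merely that the gradient is pointwise parallel to $c$), and that the characteristic-zero hypothesis is used twice, once to guarantee $c \neq 0$ and once to conclude that $\partial F/\partial y_j = 0$ for $j\geq 2$ implies independence of $F$ from $y_2,\ldots,y_n$.
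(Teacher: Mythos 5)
Your proof is correct, and while the direction (2) $\Rightarrow$ (1) is essentially the paper's argument (write $f = u(\ell)$, apply the chain rule, and use characteristic zero to get $\deg u'(\ell) = \deg f - 1$), your treatment of the harder direction (1) $\Rightarrow$ (2) finishes differently. Both you and the paper begin with the same decisive step: the degree count forcing the cofactors in $f_{x_i} = c_i\hat{f}$ to be \emph{constants}, not all zero (characteristic zero is needed exactly where you say). From there the paper stays inside its closed-polynomial framework: since $g = c_1x_1 + \cdots + c_nx_n$ has degree one, it is a closed polynomial, so by \cite[Theorem 3.1]{KN15} there is a $Q(R)$-derivation $D$ on $Q(R)\otimes_R R[{\rm X}]$ with $\Ker D = Q(R)[g]$, and the one-line computation $D(f) = \sum_i D(x_i)c_i\hat{f} = D(g)\hat{f} = 0$ places $f$ in $Q(R)[g]$. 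You instead make an invertible linear change of coordinates $x = Ly$ adapted to the hyperplane $c^{\mathsf T}v = 0$, observe that $F = f\circ L$ has vanishing partials in $y_2, \ldots, y_n$, and conclude $F \in K[y_1]$ using characteristic zero again. Your route is more elementary and self-contained, requiring no appeal to the correspondence between closed polynomials and kernels of derivations, at the cost of setting up and inverting the coordinate change; the paper's route is shorter and avoids any change of variables, but leans on the cited theorem (applied over the field $Q(R)$, where its hypotheses hold). Both arguments are sound, and your reduction to the field $K = Q(R)$ at the outset is legitimate since, by the paper's definition, $\hat{f}$ is already computed in $Q(R)\otimes_R R[{\rm X}]$ and condition (2) is a statement over $Q(R)$.
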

\begin{proof}
{\bf (1) $\Rightarrow$ (2)} Let $d = \deg f$. 
There exist $r_1, \ldots, r_n \in Q(R) \otimes_RR[{\rm X}]$ such that $f_{x_i} = r_i \hat{f}$ for $1 \leq i \leq n$. We may assume that $f_{x_1} \neq 0$. Then 
	\begin{center}
	$d - 1 = \deg \hat{f} \leq \deg f_{x_1} \leq d - 1$,  
	\end{center}
so we have $\deg f_{x_1} = d - 1 = \deg \hat{f}$ and $r_1 \in R \setminus \{ 0 \}$. For $1 \leq i \leq n$ with $f_{x_i} \neq 0$, using the same argument, we have $r_i \in Q(R) \setminus \{ 0 \}$. On the other hand, for $1 \leq i \leq n$ with $f_{x_i} = 0$, we have $r_i = 0$. So $r_i$ is either a non-zero constant polynomial or $0$ for $1 \leq i \leq n$. Set $g := r_1x_1 + \cdots + r_nx_n$. Since $\deg g = 1$, we see easily that $g$ is a closed polynomial in $Q(R) \otimes_RR[{\rm X}]$. By \cite[Theorem 3.1]{KN15}, there exists a $Q(R)$-derivation $D$ on $Q(R) \otimes_RR[{\rm X}]$ such that $\Ker D = Q(R)[g]$. Then 
\begin{equation*}
	\begin{split}
	D(f) & = D(x_1)f_{x_1} + \cdots + D(x_n)f_{x_n} \\ 
	& = D(x_1)r_1\hat{f} + \cdots + D(x_n)r_n\hat{f} \\ 
	& = D(g) \hat{f} \\ 
	& = 0.
	\end{split}
\end{equation*} 
Therefore $f \in \Ker D = Q(R)[g]$. 

{\bf (2) $\Rightarrow$ (1)} 
Let $d = \deg f$ and $g := r_1x_1 + \cdots + r_nx_n$. Since $f \in Q(R)[g]$, there exists $u(t) \in Q(R)[t]$ of degree $d$ with $f = u(g)$. Then $f_{x_i} = r_iu'(g)$ for $1 \leq i \leq n$, where $u'(t) = du(t)/dt$. Then $\deg u'(g) = d - 1$ and $u'(g)$ divides $\hat{f}$. So we have 
	\begin{center}
	$\deg u'(g) \leq \deg \hat{f} \leq d - 1$. 
	\end{center}
Therefore $\deg \hat{f} = d - 1$. 
\end{proof}

By using this lemma, we get the following result. This is also a generalization of \cite[Corollary 4.2]{N18} to the case where the coefficient ring is an integral domain of characteristic zero and $n \geq 1$. 
\begin{thm} \label{thm:2-5} 
Let $R$ be an integral domain of characteristic zero. For a non-constant polynomial $f \in R[{\rm X}] \setminus R$ of prime degree such that $Q(R)[f] \cap R[{\rm X}] = R[f]$, the following conditions are equivalent to each other{\rm :}   
	\begin{enumerate}
		\item
		$f$ is a closed polynomial. 
		\item
		$\deg \hat{f} < \deg f - 1$. 
	\end{enumerate} 
\end{thm}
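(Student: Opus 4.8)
The plan is to prove the two implications separately, the point being that when $d:=\deg f$ is prime the threshold appearing in Theorem \ref{thm:N16}(b) collapses to exactly $\deg f-1$.

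\textbf{Direction (2) $\Rightarrow$ (1).} This I expect to be immediate. Since $d$ is prime, for the weight ${\bf w}=(1,\ldots,1)$ we have $\deg_{\bf w}f=d$ and $\ld_{\bf w}(f)=d$, so $\frac{\ld_{\bf w}(f)-1}{\ld_{\bf w}(f)}\deg_{\bf w}f=\frac{d-1}{d}\cdot d=d-1$. Hence condition (2), namely $\deg\hat f<d-1$, is precisely the inequality (b) of Theorem \ref{thm:N16} for this weight (and $d\ge 2$). Combined with the standing hypothesis $Q(R)[f]\cap R[{\rm X}]=R[f]$, Theorem \ref{thm:N16} yields that $f$ is a closed polynomial.

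\textbf{Direction (1) $\Rightarrow$ (2), reduction.} I would argue the contrapositive. First note that $\deg\hat f\le d-1$ always holds: in characteristic zero some $f_{x_i}\neq 0$, and $\hat f$ divides it. So it suffices to assume $\deg\hat f=d-1$ and deduce that $f$ is \emph{not} closed. By Lemma \ref{lem:2-4} there are $r_1,\ldots,r_n\in Q(R)$, not all zero, and $u(t)\in Q(R)[t]$ with $\deg u=d$ such that $f=u(g)$ where $g=r_1x_1+\cdots+r_nx_n$. Over $Q(R)$ this already exhibits $g$ as integral over $Q(R)[f]$ with $g\notin Q(R)[f]$, so $f$ fails to be closed over $Q(R)$; the genuine task — and the step I expect to be the main obstacle — is to \emph{descend} this to $R$, i.e.\ to produce an element of $R[{\rm X}]\setminus R[f]$ that is integral over $R[f]$ itself, since the integral relation for $g$ has coefficients only in $Q(R)[f]$.

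\textbf{The construction.} To clear denominators, choose $\mu\in Q(R)\setminus\{0\}$ with $G:=\mu g\in R[{\rm X}]$ and write $f=\sum_{i=0}^{d}\tilde a_iG^i$ with $\tilde a_i\in Q(R)$, $\tilde a_d\neq 0$ and $\tilde a_0=f(0,\ldots,0)\in R$. Because $G$ is a linear form, $\tilde a_iG^i$ is exactly the degree-$i$ homogeneous component of $f$ and therefore lies in $R[{\rm X}]$; reading off the coefficient of $x_{j_0}^{\,i}$, where $c$ denotes a nonzero coefficient of $G$ (say in $x_{j_0}$), shows $B_i:=\tilde a_ic^i\in R$ for all $i$. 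The crucial choice is the multiplier: I would set $h:=B_d\,G\in R[{\rm X}]$, substitute $G=h/B_d$ into $f=\sum\tilde a_iG^i$, and divide the resulting identity by the leading coefficient $\tilde a_d/B_d^{\,d}$ to obtain the monic relation
\begin{equation*}
h^{d}+\sum_{i=1}^{d-1}\frac{\tilde a_iB_d^{\,d-i}}{\tilde a_d}\,h^{i}+\frac{B_d^{\,d}(\tilde a_0-f)}{\tilde a_d}=0 .
\end{equation*}
Using $\tilde a_i=B_i/c^i$, each coefficient simplifies to an element of $R$ times a power of $f$ (for instance the $h^i$-coefficient becomes $B_iB_d^{\,d-i-1}c^{\,d-i}\in R$, and $B_d^{\,d}/\tilde a_d=B_d^{\,d-1}c^{\,d}\in R$), so all coefficients lie in $R[f]$. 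Thus $h$ is integral over $R[f]$; since $\deg h=1<d$ and $h$ is nonconstant, $h\notin R[f]$. Hence $R[f]$ is not integrally closed in $R[{\rm X}]$, $f$ is not closed, and the contrapositive is complete. The only delicate point is verifying that the choice $\lambda=B_d$ simultaneously makes the relation monic and keeps every coefficient in $R[f]$; everything else is bookkeeping.
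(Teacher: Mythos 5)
Your proof is correct, and while its skeleton matches the paper's (contrapositive plus Lemma \ref{lem:2-4} for one direction, the prime-degree collapse of the threshold in Theorem \ref{thm:N16} for the other), the decisive step of (1) $\Rightarrow$ (2) is done by a genuinely different route. The direction (2) $\Rightarrow$ (1) is exactly the paper's argument: for ${\bf w}=(1,\ldots,1)$ and prime $d=\deg f$ one has $\ld_{\bf w}(f)=d$, so the inequality in Theorem \ref{thm:N16}(b) becomes $\deg\hat f<d-1$. For (1) $\Rightarrow$ (2), both you and the paper assume $\deg\hat f=\deg f-1$ and use Lemma \ref{lem:2-4} to write $f=u(g)$ with $g$ a nonzero linear form over $Q(R)$; but the paper then finishes in one line by citing \cite[Theorem 3.1]{KN15}: since $\deg f\geq 2$, $Q(R)[f]\subsetneq Q(R)[g]$, and that characterization of closed polynomials (combined, implicitly, with the standing hypothesis $Q(R)[f]\cap R[{\rm X}]=R[f]$ to descend the conclusion from $Q(R)$ to $R$) shows $f$ is not closed. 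You instead carry out the descent explicitly: clearing denominators to get $G=\mu g\in R[{\rm X}]$, noting that the $\tilde a_iG^i$ are the homogeneous components of $f$ so that $B_i=\tilde a_ic^i\in R$, and verifying that $h:=B_dG\in R[{\rm X}]$ satisfies a monic equation over $R[f]$ with coefficients $B_iB_d^{\,d-i-1}c^{\,d-i}\in R$ and constant term $B_d^{\,d-1}c^{\,d}(\tilde a_0-f)\in R[f]$ (your computations check out), while $h\notin R[f]$ because $\deg h=1<d$. This buys two things: your argument is self-contained, needing nothing beyond Lemma \ref{lem:2-4} and the definition of closedness, whereas the paper leans on an external characterization theorem; and it reveals that the hypothesis $Q(R)[f]\cap R[{\rm X}]=R[f]$ is not actually used in the implication (1) $\Rightarrow$ (2) --- it is needed only for (2) $\Rightarrow$ (1) via Theorem \ref{thm:N16}. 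The paper's route, in exchange, is much shorter.
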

\begin{proof}
{\bf (1) $\Rightarrow$ (2)} 
Suppose that $\deg \hat{f} = \deg f - 1$. By Lemma \ref{lem:2-4}, there exist $r_1, \ldots, r_n \in Q(R)$ with $(r_1, \ldots, r_n) \neq (0, \ldots, 0)$ satisfying $f \in Q(R)[g]$, where $g := r_1x_1 + \cdots + r_nx_n$. Since $\deg f$ is prime, especially $\deg f \geq 2$, we have $Q(R)[f] \subsetneq Q(R)[g]$. By \cite[Theorem 3.1]{KN15}, $f$ is not a closed polynomial. 

{\bf (2) $\Rightarrow$ (1)} 
Suppose that $\deg \hat{f} < \deg f - 1$. Since $\deg f$ is prime, $\ld_{\bf w}(f) = \deg f \geq 2$, where ${\bf w} = (1, \ldots, 1)$. Then 
	\begin{center}
	$\displaystyle \frac{\ld_{\bf w}(f) - 1}{\ld_{\bf w}(f)} \deg f  = \frac{\deg f - 1}{\deg f} \deg f = \deg f - 1$. 
	\end{center}
Therefore we have
	\begin{center}
	$\displaystyle \deg \hat{f} < \deg f - 1 = \frac{\ld_{\bf w}(f) - 1}{\ld_{\bf w}(f)} \deg f$. 
	\end{center}
By Theorem \ref{thm:N16}, $f$ is a closed polynomial. 

\end{proof}

The following result give a characterization of factorially closed polynomials in the case where the coefficient ring is an algebraically closed field of any characteristic and $n \geq 1$. This is a generalization of \cite[Theorem 2.5 (2)]{N18}.  
\begin{thm} \label{thm:2-6} 
Let $k$ be an algebraically closed field. For a non-constant polynomial $f \in k[{\rm X}] \setminus k$, the following conditions are equivalent to each other{\rm :}   
	\begin{enumerate}
		\item
		$f$ is a factorially closed polynomial. 
		\item
		For any $\lambda \in k$, $f - \lambda$ is irreducible. 
	\end{enumerate} 
\end{thm}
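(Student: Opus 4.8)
The plan is to lean on three facts: since $f$ is non-constant over a field it is \emph{transcendental} over $k$ (its powers have pairwise distinct degrees), so $k[f] \subseteq k[{\rm X}]$ is a one-variable polynomial ring, identified with $k[t]$ via $f \mapsto t$; the ring $k[{\rm X}]$ is a UFD; and algebraic closedness of $k$ will enter only to split one-variable polynomials into linear factors. Note that neither the transcendence nor the UFD property depends on $\ch k$, so the argument should cover all characteristics, as the statement requires.

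For $(1) \Rightarrow (2)$ I would argue by contradiction. Assume $f$ is factorially closed and suppose $f - \lambda = gh$ for some $\lambda \in k$ and non-constant $g, h \in k[{\rm X}]$. Since $f - \lambda \in k[f]$ and $g, h$ are nonzero, factorial closedness forces $g, h \in k[f]$, say $g = p(f)$ and $h = q(f)$ with $p, q \in k[t]$. Transporting $f - \lambda = p(f)q(f)$ through the isomorphism $k[f] \cong k[t]$ yields $t - \lambda = p(t)q(t)$; comparing degrees, one of $p, q$ must be constant, so one of $g, h$ lies in $k$, contradicting that $f - \lambda$ was reducible. Hence every $f - \lambda$ is irreducible.

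For $(2) \Rightarrow (1)$, take $g, h \in k[{\rm X}] \setminus \{0\}$ with $gh \in k[f]$ and write $gh = P(f)$ for some $P \in k[t]$. Here algebraic closedness lets me factor $P(t) = c \prod_{i=1}^{m}(t - \lambda_i)$ with $c \in k^*$, so that $gh = c\prod_{i=1}^{m}(f - \lambda_i)$. By hypothesis $(2)$ each factor $f - \lambda_i$ is irreducible, so the right-hand side is a factorization of $gh$ into irreducibles in the UFD $k[{\rm X}]$. Since $g$ divides $gh$, unique factorization forces $g$ to be a unit times a product of some of the $f - \lambda_i$; in particular $g \in k[f]$, and symmetrically $h \in k[f]$. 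The degenerate case $m = 0$, where $gh \in k^*$ and both factors are units, is immediate.

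I do not expect a serious obstacle in either direction; the substance is just the interplay of the three facts above. The step most deserving of care is the uniqueness-of-factorization argument in $(2) \Rightarrow (1)$: one must observe that $c\prod(f - \lambda_i)$ really is \emph{the} irreducible factorization of $gh$---this is precisely where hypothesis $(2)$ is invoked---so that every irreducible divisor of $g$ is an associate of some $f - \lambda_i$ and therefore already lies in $k[f]$.
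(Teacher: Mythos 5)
Your proof is correct and takes essentially the same route as the paper's: for $(1) \Rightarrow (2)$ both invoke factorial closedness to place the alleged factors of $f - \lambda$ in $k[f]$ and derive a contradiction by a degree comparison, and for $(2) \Rightarrow (1)$ both use algebraic closedness to write $gh = c\prod_i (f - \lambda_i)$ and then distribute the irreducible factors $f - \lambda_i$ between $g$ and $h$. Your version merely spells out details the paper leaves implicit (the isomorphism $k[f] \cong k[t]$ and the appeal to unique factorization in the UFD $k[{\rm X}]$), which is a fair expansion rather than a different argument.
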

\begin{proof}
{\bf (1) $\Rightarrow$ (2)} 
Suppose that $k[f]$ is a factorially closed in $k[{\rm X}]$. If there exists $\lambda \in k$ such that $f - \lambda$ is reducible, then $f - \lambda = gh$ for some $g, h \in k[{\rm X}] \setminus k$. Then $gh \in k[f - \lambda] = k[f]$, however, since $\deg g$ and $\deg h$ are less than $\deg (f - \lambda)$, $g$ and $h$ do not belong to $k[f]$. This is a contradiction. 

{\bf (2) $\Rightarrow$ (1)} 
Let $g, h \in k[{\rm X}] \setminus \{ 0 \}$ such that $gh \in k[f]$. Since $k$ is an algebraically closed field, there exist $\lambda_1, \ldots, \lambda_s \in k$ and $\varepsilon \in k^*$ such that 
\begin{center}
$\displaystyle gh = \varepsilon \prod_{i = 1}^{s}(f - \lambda_i)$.
\end{center}
By reordering $\lambda_1, \ldots, \lambda_s \in k$ if necessary, we have 
$g = \varepsilon_1 \prod_{i = 1}^{r}(f - \lambda_i)$ and $h = \varepsilon_2 \prod_{j = r + 1}^{s}(f - \lambda_j)$, 
for $\varepsilon_1, \varepsilon_2 \in k^*$. Hence $g, h \in k[f]$, so $k[f]$ is factorially closed in $k[{\rm X}]$.  
\end{proof}

By Theorem \ref{thm:2-6}, we can give examples of factorially closed polynomials. In particular, Example \ref{ex:2-1} gives us examples which are (integrally) closed but not factorially closed polynomials. By using Theorem \ref{thm:2-6}, we get the following examples.   

\begin{example} \label{ex:2-7} 
{\rm 
{\bf (a)} Let $\C[x, y, z]$ be the polynomial rings in tree variables over $\C$. We define the polynomial in $\C[x, y, z]$ by 
	\begin{center}
	  $f := x^nz - y^2 - y$, 
	\end{center}
where $n \geq 2$. Then $f$ is a factorially closed polynomial, but is not a coordinate (see \cite[Proposition (ii)]{ML01}). This is often called a \emph{Danielewski surface}. 

{\bf  (b)} Let $\C[x, y, z, t]$ be the polynomial rings in four variables over $\C$. We define the polynomial in $\C[x, y, z, t]$ by 
	\begin{center}
	  $g := x + x^2y + z^2 + t^3$. 
	\end{center}
Then $g$ is a factorially closed polynomial, but is not a coordinate (see \cite[\S 1]{ML96}). This is often called a \emph{Koras-Russell threefold}. 	
}
\end{example}
\begin{proof} 
{\rm \bf (a)} For $\lambda \in \C$, let $f_{\lambda} := f - \lambda$. We assume that $f_{\lambda} = gh$ for some $g, h \in \C[x, y, z] \setminus \{ 0 \}$. Computing the $z$-degree of $f_{\lambda} = gh$, we may assume that $\deg_zg = 1$ and $\deg_zh = 0$. Here, we write $g = g_1z + g_2$ for $g_1, g_2 \in \C[x, y]$. Then we have $x^n = g_1h$ and $-y^2 - y - \lambda = g_2h$. Hence $\deg_yh = \deg_xh = 0$, which means $h \in \C^*$. Therefore $f_{\lambda}$ is irreducible for any $\lambda \in \C$. By Theorem \ref{thm:2-6}, $f$ is a factorially closed polynomial. 

{\rm \bf (b)} For $\lambda \in \C$, let $g_{\lambda} := g - \lambda$. We assume that $g_{\lambda} = pq$ for some $p, q \in \C[x, y, z, t] \setminus \{ 0 \}$. Computing the $y$-degree of $g_{\lambda} = pq$, we may assume that $\deg_yp = 1$ and $\deg_yq = 0$. Here, we write $p = p_1y + p_2$ for $p_1, p_2 \in \C[x, z, t]$. Then we have $x^2 = p_1q$ and $x + z^2 + t^3 - \lambda = p_2q$. By the first equation, we have $\deg_zq = \deg_tq = 0$ and $q$ is a component of $x^2$. If $x$ divides $q$, then this contradicts the second equation. Thus $\deg_xq = 0$, so $q \in \C^*$. By Theorem \ref{thm:2-6}, $g$ is a factorially closed polynomial. 
\end{proof}

As the end of this section, we show a relation between factorially closed polynomials and Darboux polynomials. Suppose that $n = 2$. For $f \in R[{\bf X}] = R[x, y]$, we define an $R$-derivation $\Delta_f$ by 
	\begin{center}
	  $\displaystyle \Delta_f := -f_y\frac{\partial}{\partial x} + f_y \frac{\partial}{\partial y}$. 
	\end{center} 
\begin{prop} \label{prop:2-8} 
Let $k$ be an algebraically closed field of characteristic zero and let $f \in k[x, y] \setminus k$ be a non-constant polynomial. If $f$ is a factorially closed polynomial, then $\Delta_f$ has no Darboux polynomials any other than elements of the kernel of $\Delta_f$.  
\end{prop}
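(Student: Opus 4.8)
The plan is to show that any Darboux polynomial $g$ of $\Delta_f$, say with $\Delta_f(g) = hg$, already lies in $\Ker\Delta_f$; equivalently, that the only possible cofactor is $h = 0$. First I would record the two facts that drive everything: $\Delta_f(f) = 0$ (so $f$ itself is a kernel element), and for every $g$ one has $\Delta_f(g) = f_xg_y - f_yg_x$, the Jacobian determinant $\partial(f,g)/\partial(x,y)$. The Darboux condition $\Delta_f(g) \in (g)$ therefore reads $f_xg_y - f_yg_x \equiv 0 \pmod g$. It is enough to treat irreducible $g$: in the UFD $k[x,y]$ each irreducible factor of a Darboux polynomial is again a Darboux polynomial (compare the order of $\Delta_f(g)/g = \sum_i e_i\,\Delta_f(p_i)/p_i$ along each prime $p_i$ of $g=\prod_i p_i^{e_i}$, where only the $i$-th term can have a pole), and once every irreducible factor is shown to lie in $\Ker\Delta_f$, the product $g$ does too by the Leibniz rule.

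The heart of the argument is to prove that an irreducible Darboux polynomial $g$ divides $f - \lambda$ for some $\lambda \in k$; geometrically, that the invariant curve $C = V(g)$ is a level curve of $f$. I would pass to the coordinate ring $A = k[x,y]/(g)$, which is a domain since $g$ is irreducible, with fraction field $K = k(C)$ of transcendence degree $1$ over $k$, and work in the module of K\"ahler differentials $\Omega_{K/k}$, which is one-dimensional over $K$ in characteristic zero. Writing $\bar{\phantom{x}}$ for images in $A$, the relation $\bar g = 0$ gives $\bar{g_x}\,d\bar x + \bar{g_y}\,d\bar y = 0$ in $\Omega_{K/k}$, and $(\bar{g_x}, \bar{g_y}) \neq (0,0)$ because a nonzero partial derivative of $g$ has smaller degree than $g$ and so is not divisible by $g$ (here characteristic zero guarantees $g_x, g_y$ are not both zero). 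The congruence $f_xg_y \equiv f_yg_x \pmod g$ says the rows $(\bar{f_x},\bar{f_y})$ and $(\bar{g_x},\bar{g_y})$ are $K$-proportional, so $(\bar{f_x},\bar{f_y}) = \mu(\bar{g_x},\bar{g_y})$ for some $\mu \in K$; hence $d\bar f = \bar{f_x}\,d\bar x + \bar{f_y}\,d\bar y = \mu\big(\bar{g_x}\,d\bar x + \bar{g_y}\,d\bar y\big) = 0$ (the degenerate case $(\bar{f_x},\bar{f_y})=(0,0)$ gives $d\bar f = 0$ outright). Since $k$ is algebraically closed of characteristic zero, the kernel of $d\colon K \to \Omega_{K/k}$ is exactly $k$, so $\bar f = \lambda \in k$, i.e.\ $g \mid f - \lambda$.

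With this in hand the conclusion is immediate from factorial closure. By Theorem \ref{thm:2-6}, $f - \lambda$ is irreducible for every $\lambda \in k$; since the irreducible $g$ divides $f - \lambda$, we get $g = c(f-\lambda)$ with $c \in k^*$, and then $\Delta_f(g) = c\,\Delta_f(f) = 0$. Thus every irreducible Darboux polynomial lies in $\Ker\Delta_f$, and by the reduction of the first paragraph so does every Darboux polynomial.

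I expect the main obstacle to be exactly this middle step: establishing that an invariant algebraic curve of the Hamiltonian derivation $\Delta_f$ must be a fiber of $f$. The K\"ahler-differential computation makes this clean, but it hinges on the two points that $(\bar{g_x},\bar{g_y})\neq 0$ in $K$ and that $d$ has kernel $k$ on the function field of a geometrically irreducible curve; both use characteristic zero and algebraic closedness in an essential way, which is why the hypotheses on $k$ cannot be dropped. (I also read the displayed definition of $\Delta_f$ as the Jacobian derivation $-f_y\,\partial/\partial x + f_x\,\partial/\partial y$, the only reading for which $\Delta_f(f)=0$.)
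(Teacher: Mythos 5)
Your proof is correct, but it takes a genuinely different route from the paper's. The paper argues geometrically and by citation: Theorem \ref{thm:2-6} shows that every fiber of the morphism $\Spec k[x,y] \to \Spec k[f]$ is irreducible and reduced, a result of Daigle then gives $\gcd(f_x, f_y) = 1$, hence $\Delta_f$ is an irreducible derivation whose field of rational constants contains $k(f)$, and Miyanishi's lemma on vector fields on factorial schemes is invoked to conclude $X_{\Delta_f} = 0$, i.e.\ every Darboux polynomial of $\Delta_f$ lies in its kernel. You instead give a self-contained argument: you reduce to irreducible Darboux polynomials (your logarithmic-derivative reduction is the standard one and is valid in characteristic zero), you prove by a K\"ahler-differential computation on the curve $g = 0$ that every irreducible Darboux polynomial $g$ of $\Delta_f$ divides $f - \lambda$ for some $\lambda \in k$, and only then do you use factorial closedness, via Theorem \ref{thm:2-6}, to identify $g$ with a constant multiple of the irreducible polynomial $f - \lambda$, whence $\Delta_f(g) = 0$. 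The two proofs share Theorem \ref{thm:2-6} as the bridge from factorial closedness to irreducibility of the fibers, but your key lemma --- invariant curves of the Hamiltonian derivation $\Delta_f$ are level curves of $f$ --- replaces the appeals to Daigle and Miyanishi; this buys self-containedness and makes explicit where characteristic zero and algebraic closedness are used (the kernel of $d$ on the function field, the nonvanishing of $(\bar{g_x},\bar{g_y})$, and Theorem \ref{thm:2-6} itself), while the paper's route buys brevity and situates the result within the established theory of irreducible derivations and fibrations. You are also right that the paper's displayed definition of $\Delta_f$ contains a typo: both coefficients read $f_y$, whereas the intended Jacobian derivation is $-f_y\,\partial/\partial x + f_x\,\partial/\partial y$, the only reading under which $\Delta_f(f) = 0$.
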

\begin{proof} 
We define a morphism $\Phi_f : \Spec k[x, y] \to \Spec k[f]$ by the inclusion $k[f] \subset k[x, y]$. By Proposition \ref{thm:2-6}, every fiber of $\Phi_f$ is irreducible and reduced, in particular it is a fibration. By \cite[Corollary 2.4]{Dai97}, gcd($f_x, f_y$) $= 1$, so $\Delta_f$ is irreducible. Moreover $k(x, y)^{\Delta_f}$ contains $k(f)$. Therefore $f$ and $\Delta_f$ satisfy the assumptions of \cite[Lemma 2.4]{Miy95}. By \cite[Lemma 2.4 (2)]{Miy95}, $X_{\Delta_f} = 0$, which means that if $g$ is a Darboux polynomial of $D$, then $g \in k[x, y]^D$.  
\end{proof}

\section{The kernel of a monomial derivation on $R[x, y]$}
Let $R$ be an integral domain containing $\Q$. In this section, we study the kernels of $R$-derivations on the polynomial ring $R[x,y]$ in two variables $x$ and $y$ over $R$. Let $D$ be an $R$-derivation on $R[x, y]$. We denote the divergence of $D$ by $\dg(D)$, namely, $\dg(D) := \partial D(x)/\partial x + \partial D(y)/\partial y$. A non-zero $R$-derivation $D$ on $R[x,y]$ is said to be {\it monomial} if $D(x)$ and $D(y)$ are monomials, here we assume that a monomial may not be monic. By using results on closed polynomials in the previous section, we determine generators of the kernel of monomial derivations on $R[x, y]$. 

\begin{lem} \label{lem:3-1} 
Let $R$ be an integral domain containing $\Q$ and let $D$ be an $R$-derivation on $R[x, y]$. If $\dg(D) = 0$, then there exists $f \in R[x, y] \setminus R$ such that $D(f) = 0$. 
\end{lem}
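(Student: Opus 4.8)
The plan is to produce an explicit element of $R[x,y]$ that plays the role of a Hamiltonian for $D$. Write $a := D(x)$ and $b := D(y)$, so that $D = a\,\partial/\partial x + b\,\partial/\partial y$ and the hypothesis $\dg(D) = 0$ reads $a_x + b_y = 0$. Observe first that any $f$ satisfying $f_x = b$ and $f_y = -a$ automatically lies in the kernel, since $D(f) = a f_x + b f_y = ab + b(-a) = 0$. So the whole problem reduces to solving the first-order system $f_x = b$, $f_y = -a$ in $R[x,y]$, and the divergence-free condition is exactly what one expects to make this system integrable.

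Then I would construct $f$ by the standard two-step integration. Since $R \supseteq \Q$, every monomial $x^m y^n$ has a formal antiderivative $\frac{1}{m+1} x^{m+1} y^n$ in $R[x,y]$, so $b$ admits an antiderivative $g \in R[x,y]$ with $g_x = b$ (take $g$ with no term independent of $x$). I then look for $f$ of the shape $f = g + \phi(y)$ with $\phi \in R[y]$, which already satisfies $f_x = b$; the remaining requirement $f_y = -a$ becomes $\phi'(y) = -a - g_y$, and this is solvable in $R[y]$ precisely when the right-hand side is free of $x$.

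The crux is this compatibility check, and it is where the hypothesis is spent: differentiating, $\partial(-a - g_y)/\partial x = -a_x - (g_x)_y = -a_x - b_y = -\dg(D) = 0$, using symmetry of the mixed second partials and $g_x = b$. Hence $-a - g_y \in R[y]$, and, integrating once more in $y$ (again using $\Q \subseteq R$), it has an antiderivative $\phi \in R[y]$. With $f := g + \phi$ we get $f_x = b$, $f_y = -a$, so $D(f) = 0$. Finally, if $D \neq 0$ then $(a,b) \neq (0,0)$, whence $(f_y, f_x) = (-a, b) \neq (0,0)$ and $f \notin R$; the case $D = 0$ is trivial, since then every non-constant polynomial is annihilated. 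I expect the only genuinely non-mechanical point to be recognizing that $\dg(D) = 0$ is exactly the integrability condition for the Hamiltonian system above; everything else is routine formal integration enabled by $\Q \subseteq R$.
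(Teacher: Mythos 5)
Your proof is correct and is essentially the paper's own argument: both construct a ``Hamiltonian'' $f$ with $f_x = D(y)$ and $f_y = -D(x)$ by formal term-by-term integration (possible because $\Q \subseteq R$), with $\dg(D) = 0$ serving exactly as the integrability condition. The only differences are presentational --- the paper writes the antiderivative down explicitly coefficient by coefficient, whereas you organize it as a two-step integration with a mixed-partials compatibility check --- together with your (welcome) extra care in verifying $f \notin R$ and disposing of the trivial case $D = 0$, points the paper leaves implicit.
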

\begin{proof}
Let $p := D(x)$ and $q := D(y)$. Then we can write $p, q$ as below: 
	\begin{center}
	  $\displaystyle p = \sum_{m, n \geq 0}[m, n]_px^my^n$ and $\displaystyle q = \sum_{m, n \geq 0}[m, n]_qx^my^n$, 
	\end{center}
where $[m, n]_p, [m, n]_q \in R$. Then 
	\begin{enumerate}
	  \item[]
	  $\displaystyle \frac{\partial p}{\partial x} = \sum_{m \geq 1, n \geq 0} m[m, n]_px^{m - 1}y^n = \sum_{m, n \geq 1} m[m, n - 1]_px^{m - 1}y^{n - 1}$, 
	  \item[]
	  $\displaystyle \frac{\partial q}{\partial y} = \sum_{m \geq 0, n \geq 1} n[m, n]_qx^my^{n - 1} = \sum_{m, n \geq 1} n[m - 1, n]_qx^{m - 1}y^{n - 1}$. 
	\end{enumerate}
Since $0 = \dg(D) = \partial p/\partial x + \partial q/\partial y$, we have $-n^{-1}[m, n - 1]_p = m^{-1}[m - 1, n]_q$ for $m, n \geq 1$. Here, we define a polynomial $f \in R[x, y]$ by $f := \sum_{m, n \geq 0}[m, n]_fx^my^n$, where $[0, 0]_f := 0$, $[m + 1, 0]_f := (m - 1)^{-1}[m, 0]_q$ for $m \geq 0$ and $[m, n + 1]_f := -(n + 1)^{-1}[m, n]_p$ for $m, n \geq 0$. 
Then 
	\begin{center}
	  $\displaystyle f = \sum_{m \geq 0} \frac{1}{m + 1}[m, 0]_qx^{m + 1} + \sum_{m, n \geq 0} - \frac{1}{n + 1}[m, n]_px^my^{n + 1}$. 
	\end{center}
Thus $f_x = q$, $f_y = - p$, so $D(f) = f_xp + f_yq = 0$. 
\end{proof} 
In the case where $R$ is a UFD, the kernel of a non-zero derivation on $R[x, y]$ is generated by one polynomial (see \cite[Corollary 3.2]{EK04}) and it is integrally closed in $R[x, y]$. Thus, if $R[x,y]^D \neq R$, then it is generated by a closed polynomial. So, to determine a generator of the kernel of a derivation on $R[x, y]$, it is sufficient  that we find a closed polynomial which is vanished by the derivation. Indeed, the following holds  true.
\begin{lem} \label{lem:3-2}
Let $R$ be a UFD of characteristic zero and let $D$ be a non-zero $R$-derivation on $R[x,y]$. If there exist a closed polynomial $f \in R[x, y] \setminus R$ such that $D(f) = 0$ and $Q(R)[f] \cap R[x, y] = R[f]$, then $R[x, y]^D = R[f]$.  
\end{lem}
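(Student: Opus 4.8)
The plan is to prove Lemma \ref{lem:3-2} by showing the two inclusions $R[f] \subseteq R[x,y]^D$ and $R[x,y]^D \subseteq R[f]$. The first inclusion is immediate: since $D(f) = 0$ and $D$ is an $R$-derivation, every element of $R[f]$ lies in the kernel, so $R[f] \subseteq R[x,y]^D$. The real content is the reverse inclusion, and here I would invoke the structural result cited just before the statement, namely \cite[Corollary 3.2]{EK04}, which guarantees that for a UFD $R$ of characteristic zero the kernel of a non-zero $R$-derivation on $R[x,y]$ is of the form $R[x,y]^D = R[g]$ for a single polynomial $g$ (and that it is integrally closed in $R[x,y]$).

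Granting that $R[x,y]^D = R[g]$, the goal becomes to show $R[f] = R[g]$. First I would observe that $f \in R[x,y]^D = R[g]$, so $f = u(g)$ for some univariate polynomial $u$ over $R$; in particular $R[f] \subseteq R[g]$. To upgrade this to equality I would pass to the quotient field $Q(R)$ and use the closedness of $f$. The key point is that $f$ being a closed polynomial means $R[f]$ is integrally closed in $R[x,y]$, and $g$ is integral over $R[f]$: indeed $g$ lies in $R[x,y]$ and satisfies an algebraic relation over $Q(R)[f]$ because $f = u(g)$ exhibits $f$ as algebraic over $Q(R)$ in terms of $g$, forcing $Q(R)[f]$ and $Q(R)[g]$ to have the same transcendence degree $1$, whence $g$ is algebraic — in fact integral — over $Q(R)[f]$.

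The technical heart, and the step I expect to be the main obstacle, is converting this integral dependence over the quotient field into integral dependence over $R[f]$ itself, so that closedness of $f$ can be applied directly. This is exactly where the hypothesis $Q(R)[f] \cap R[x,y] = R[f]$ earns its place. I would argue as follows: since $f = u(g)$ and both $f,g$ are closed (note $g$ generates the integrally closed kernel, so $R[g]$ is integrally closed in $R[x,y]$), the relation $f = u(g)$ together with the fact that $R[f]$ and $R[g]$ are both integrally closed in $R[x,y]$ and have the same integral closure forces $u$ to be a degree-one polynomial, i.e.\ $u(t) = at + b$ with $a \in Q(R)$. The condition $Q(R)[f] \cap R[x,y] = R[f]$ then pins down $a,b$ to lie in $R$ with $a \in R^*$, giving $g = a^{-1}(f - b) \in R[f]$ and hence $R[g] \subseteq R[f]$.

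Combining $R[f] \subseteq R[g] = R[x,y]^D$ with the reverse inclusion $R[x,y]^D = R[g] \subseteq R[f]$ yields $R[x,y]^D = R[f]$, as desired. Throughout, the characteristic-zero and UFD hypotheses are used precisely to guarantee the single-generator form of the kernel from \cite[Corollary 3.2]{EK04} and to make the gcd/primitivity arguments (as in Lemma \ref{lem:1-3}) available; I would take care that every application of closedness is to a polynomial already known to satisfy the $Q(R)[\,\cdot\,] \cap R[x,y] = R[\,\cdot\,]$ condition, so that the notions of ``closed over $R$'' and ``closed over $Q(R)$'' coincide and no extraneous scalar factors slip in.
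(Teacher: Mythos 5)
Your proposal follows essentially the same route as the paper's proof: invoke \cite[Corollary 3.2]{EK04} to write $R[x,y]^D = R[g]$, express $f = u(g)$ with coefficients in $R$, divide by the leading coefficient to see that $g$ is integral over $Q(R)[f]$, use closedness of $f$ (transferred to $Q(R)$, which the intersection hypothesis permits) to conclude $g \in Q(R)[f]$, and finally apply $Q(R)[f] \cap R[x,y] = R[f]$ to get $g \in R[f]$, whence $R[f] = R[g] = R[x,y]^D$. The paper is slightly more direct---once $g \in Q(R)[f]$ it intersects with $R[x,y]$ immediately, so your degree-one analysis of $u$ (whose stated justification via ``same integral closure'' is looser than the underlying argument) is not actually needed---but the substance is identical.
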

\begin{proof}
Suppose that $R[x, y]^D = R[g]$ and $D(f) = 0$, where $f$ is a closed polynomial satisfying $Q(R)[f] \cap R[x, y] = R[f]$. Here, we note that $Q(R)[f]$ is also integrally closed in $Q(R)[x, y]$. Since $D(f) = 0$, we have $R[f] \subset R[x, y]^D = R[g]$, so we can write $f$ as a polynomial in $g$ by below. 
	\begin{center}
	  $f = u_0g^m + u_1g^{m - 1} + \cdots + u_{m - 1}g + u_m$, 
	\end{center}
where $u_i \in R$ and $u_0 \neq 0$. By multiplying $u_0^{-1}$ on the both sides, $g$ is integral over $Q(R)[f]$, so $g \in Q(R)[f]$. Therefore $Q(R)[f] = Q(R)[g]$, hence $g = u_0^{- 1}f$. This means that $g \in Q(R)[f] \cap R[x, y] = R[f]$, so $R[f] = R[g] = R[x, y]^D$.   
\end{proof}
   

The following is the main result in this section, which gives the classification of kernels of monomial derivations on $R[x, y]$, where $R$ is a UFD containing $\Q$. For the following discussions, we denote $\partial/\partial x$ (resp. $\partial/\partial y$) by $\partial_x$ (resp. $\partial_y$).
\begin{thm} \label{thm:3-3}
Let $D$ be a non-zero $R$-derivation on the polynomial ring $R[x,y]$ in two variables over a UFD $R$ containing $\Q$. Assume that $D(x)$ and $D(y)$ are monomial, ${\rm gcd}(D(x), D(y)) = 1$ and $D$ is none of the following {\rm (1)}--{\rm (3)}{\rm :} 
	\begin{enumerate}
	  \item
	  $\partial_x$ or $\partial_y$, 
	  \item
	  $ay^{m}\partial_x + bx^{n} \partial_y$, where $m, n \in \Z_{\geq 0}$ and $a, b \in R \setminus \{ 0 \}$, 
	  \item
	  $nx \partial_x - my\partial_y$, where $m$ and $n$ are positive integers. 
	\end{enumerate}
Then $R[x,y]^D = R$.
\end{thm}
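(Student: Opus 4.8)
The plan is to write $D = p\,\partial_x + q\,\partial_y$ with $p = D(x) = ax^{\alpha}y^{\beta}$ and $q = D(y) = bx^{\gamma}y^{\delta}$. If $p = 0$ or $q = 0$, then $\gcd(p,q) = 1$ forces the nonzero one to lie in $R^*$, so $D$ is a unit multiple of $\partial_x$ or $\partial_y$, i.e.\ type (1); hence I may assume $a, b \in R \setminus \{0\}$. Computing the gcd of two monomials in the UFD $R[x,y]$, the hypothesis $\gcd(p,q) = 1$ then says exactly that $\gcd(a,b) \in R^*$, $\min(\alpha,\gamma) = 0$ and $\min(\beta,\delta) = 0$. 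I will argue the contrapositive: assuming some $f \in R[x,y]^D \setminus R$ exists, I will show that $D$ must be one of the excluded forms (1)--(3).

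The constraints $\min(\alpha,\gamma) = \min(\beta,\delta) = 0$ let me organize the argument by whether $\alpha$ and $\delta$ vanish, giving four exhaustive cases. If $\alpha = \delta = 0$ then $p = ay^{\beta}$ and $q = bx^{\gamma}$, so $D$ is of type (2). If $\alpha \geq 1$ and $\delta = 0$, then necessarily $\gamma = 0$, so $q = b$ is a nonzero constant and $p = ax^{\alpha}y^{\beta}$; here I write $f = \sum_i g_i(y)x^i$ and compare the top $x$-degree terms on the two sides of $D(f) = ax^{\alpha}y^{\beta}f_x + bf_y = 0$. When $\alpha \geq 2$ the summand $ax^{\alpha}y^{\beta}f_x$ strictly dominates in $x$-degree, forcing $\deg_x f = 0$ and then $f \in R$. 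When $\alpha = 1$, matching coefficients of the top power $x^N$ shows the leading coefficient $g_N$ satisfies an ODE of the shape $g_N' = cy^{\beta}g_N$ with $c \neq 0$, which by a degree count (valid since $\ch R = 0$) has no nonzero polynomial solution, again giving $f \in R$. The symmetric case $\alpha = 0$, $\delta \geq 1$ is handled by exchanging $x$ and $y$.

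The remaining case is $\alpha \geq 1$ and $\delta \geq 1$, which forces $\gamma = \beta = 0$, so $p = ax^{\alpha}$ and $q = by^{\delta}$. If $\max(\alpha,\delta) \geq 2$, say $\alpha \geq 2$ after possibly swapping $x$ and $y$, the same top-$x$-degree comparison in $D(f) = ax^{\alpha}f_x + by^{\delta}f_y$ yields $f \in R$. The only subcase left is $\alpha = \delta = 1$, that is $D = ax\,\partial_x + by\,\partial_y$, where the two monomials produced by $D$ coincide and $D$ acts diagonally: $D(x^iy^j) = (ai + bj)x^iy^j$. A nonconstant kernel element then exists iff $ai + bj = 0$ for some $(i,j) \in (\Z_{\geq 0})^2 \setminus \{(0,0)\}$, equivalently $a/b \in \Q_{<0}$. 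Using $\gcd(a,b) \in R^*$ together with $\Q \subset R$ (so that every nonzero integer is a unit), this forces $a, b \in R^*$ and $D$ to be a unit multiple of $nx\,\partial_x - my\,\partial_y$ for positive integers $n, m$; since a unit multiple has the same kernel, this is precisely the content of excluding type (3). Thus in every case a nonconstant kernel element forces $D$ to be of type (1), (2) or (3), proving the claim.

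The main obstacle is making the case split genuinely exhaustive and matching each family admitting a nonconstant kernel element with exactly one of (1)--(3); in particular, reconciling the normalized coefficients in (3) with a general diagonal derivation requires the observation that $\gcd(a,b) \in R^*$ and $\Q \subset R$ together force the coefficients into $R^*$. The characteristic-zero hypothesis must be tracked carefully, as it is used twice: once to guarantee $ai \neq 0$ for $i \geq 1$ (implicit in every leading-term comparison and in the diagonal case), and once to exclude polynomial solutions of $g_N' = cy^{\beta}g_N$.
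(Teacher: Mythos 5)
Your proof is correct, and in the decisive case it takes a genuinely different route from the paper's. The paper passes to $K=Q(R)$, handles every derivation having a constant coefficient through Lemma \ref{lem:3-5} (a $y$-degree comparison for $\partial_x+g\partial_y$ with $\deg_y g\ge 1$), and then treats the remaining family $D=x^m\partial_x+ay^n\partial_y$, $n\ge m\ge 1$, uniformly by a weighted-homogeneity argument: it puts the grading ${\bf w}=((n-1)/d,(m-1)/d)$, $d=\gcd(m-1,n-1)$, on $K[x,y]$, notes that $D$ preserves ${\bf w}$-homogeneity, and extracts the conclusion from the coefficient relations of a ${\bf w}$-homogeneous kernel element (the dichotomy on the index sets $A$ and $B$). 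You avoid gradings altogether: one leading-coefficient comparison in $x$-degree, together with the remark that $bg'=cy^{\beta}g$ has no nonzero polynomial solution in characteristic zero, kills every case except the diagonal one $D=ax\partial_x+by\partial_y$, which you settle by the eigenvalue identity $D(x^iy^j)=(ai+bj)x^iy^j$: a nonconstant kernel element exists iff $a/b\in\Q_{<0}$, which is exactly the excluded family (3). Your route is more elementary and makes transparent why (3) is the exceptional family; what the paper's method buys is a single computation covering all $m,n\ge 1$ at once, and a Lemma \ref{lem:3-5} valid for arbitrary (non-monomial) coefficient $g$, a generality it reuses in Section 4. Your four-way case split is also more airtight than the paper's reduction sentence, which jumps straight to the form $x^m\partial_x+ay^n\partial_y$ and leaves the mixed case $ax^{\alpha}y^{\beta}\partial_x+b\partial_y$ ($\alpha,\beta\ge 1$) implicit. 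Finally, your insistence on ``unit multiple of (1) or (3)'' is not pedantry but necessary: read literally, the statement is false---$D=\frac{1}{2}x\partial_x-\frac{3}{2}y\partial_y$ over $R=\Q$ satisfies all hypotheses and is none of (1)--(3), yet it kills $x^3y$---and the paper's own proof likewise only establishes the up-to-units version (its $n=1$ case ends by declaring $x\partial_x+ay\partial_y$ with $a\in\Q_{<0}$ to ``be (3)''). So your reading is the right one, and you are explicit about it where the paper is not.
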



To prove Theorem \ref{thm:3-3} we show the following two lemmas. 
First of all, by the following lemma, we see that for derivations as in Theorem \ref{thm:3-3} {\rm (1)--(3)}, their kernels are generated by a closed polynomial, that is, they contained non-constant polynomials as kernels. 
\begin{lem} \label{lem:3-4}
For the derivations as in Theorem \ref{thm:3-3} {\rm (1)}, {\rm (2)} and {\rm (3)}, the following assertions holds true. 
	\begin{enumerate}
	  \item[{\rm (a)}]
	  $D_1 := \partial_x$. Then $R[x, y]^{D_1} = R[y]$.
	  \item[{\rm (b)}]
	  $D_2 := ay^{m}\partial_x + bx^{n} \partial_y$, where $m, n \in \Z_{\geq 0}$ and $a, b \in R \setminus \{ 0 \}$ with ${\rm gcd}(a, b) = 1$. Then $R[x, y]^{D_2} = R[b(m+1)x^{n+1} - a(n+1)y^{m+1}]$.  
	  \item[{\rm (c)}]
	  $D_3 := nx \partial_x - my\partial_y$, where $m$ and $n$ are relatively prime positive integers. Then $R[x, y]^{D_3} = R[x^my^n]$. 
	\end{enumerate}
\end{lem}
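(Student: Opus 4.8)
The plan is to handle (a) directly and to reduce (b) and (c) to Lemma \ref{lem:3-2}. For (a), I would write an arbitrary $f \in R[x,y]$ as $f = \sum_{i \ge 0} c_i(y)\,x^i$ with $c_i(y) \in R[y]$, so that $D_1(f) = \sum_{i \ge 1} i\,c_i(y)\,x^{i-1}$; since $R \supseteq \Q$ has characteristic zero, each integer $i \ge 1$ is a non-zero-divisor, so this vanishes exactly when $c_i = 0$ for every $i \ge 1$, that is, when $f \in R[y]$. Hence $R[x,y]^{D_1} = R[y]$.

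For both (b) and (c) the scheme is identical: exhibit the proposed generator $f$, verify $D(f) = 0$ by a short computation, check that $f$ is a closed polynomial and that $Q(R)[f] \cap R[x,y] = R[f]$, and then invoke Lemma \ref{lem:3-2} to conclude $R[x,y]^D = R[f]$. With $f := b(m+1)x^{n+1} - a(n+1)y^{m+1}$ in case (b) one has $f_x = b(m+1)(n+1)x^n$ and $f_y = -a(m+1)(n+1)y^m$, whence $D_2(f) = ay^m f_x + bx^n f_y = 0$; with $f := x^m y^n$ in case (c) one has $D_3(f) = nx\,(m x^{m-1} y^n) - my\,(n x^m y^{n-1}) = 0$.

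It then remains to verify the two hypotheses of Lemma \ref{lem:3-2}. In case (c), $f = x^m y^n$ is closed by Example \ref{ex:2-1}, since its coefficient is $1 \in R^*$ and $\gcd(m,n) = 1$, while $c(f) = 1 \in R^*$ gives $Q(R)[f] \cap R[x,y] = R[f]$ by Lemma \ref{lem:1-3}. In case (b), note $f(0,0) = 0$ and $c(f) = \gcd\bigl(b(m+1),\,a(n+1)\bigr)$; because $\Q \subseteq R$ makes $m+1$ and $n+1$ units of $R$ and $\gcd(a,b) = 1$ by hypothesis, we get $c(f) \in R^*$, and Lemma \ref{lem:1-3} again yields $Q(R)[f] \cap R[x,y] = R[f]$. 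Closedness of $f$ in case (b) I would obtain from the ``in particular'' part of Proposition \ref{prop:2-2}: since $f_x$ and $f_y$ are nonzero scalar multiples of the coprime monomials $x^n$ and $y^m$, their gcd $\hat{f}$ is a nonzero constant, so $\hat{f} \in R \setminus \{0\}$ and $f$ is closed. (Alternatively, $\deg \hat{f} = 0$ makes the inequality of Theorem \ref{thm:N16} (b) hold whenever $\deg f \ge 2$, the only remaining case $\deg f = 1$ being covered by Theorem \ref{thm:N16} (a).)

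All of these computations are routine; the step deserving the most care is the determination of $c(f)$ and the closedness check in case (b). This is also exactly where the coprimality hypotheses enter: $\gcd(a,b) = 1$ in (b) and $\gcd(m,n) = 1$ in (c) are what ensure that $f$ is neither a proper multiple nor a proper power of a smaller polynomial, so that the listed generators are correct, and $\Q \subseteq R$ is what makes the integer factors $m+1, n+1$ invertible so that they cannot spoil $c(f)$. Were these conditions to fail, $f$ would have to be replaced by a primitive divisor or a genuine root, and the stated generators would no longer be accurate.
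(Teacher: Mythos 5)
Your proof is correct, and its essential machinery coincides with the paper's: both arguments rest on Lemma \ref{lem:3-2}, with Lemma \ref{lem:1-3} supplying the condition $Q(R)[f] \cap R[x,y] = R[f]$, Proposition \ref{prop:2-2} supplying closedness in case (b), and Example \ref{ex:2-1} supplying it in case (c). Where you genuinely diverge is in how the generators are obtained. The paper does not simply write them down: it produces them from the divergence-zero construction of Lemma \ref{lem:3-1} --- directly for (b), since ${\rm div}(D_2) = 0$, and for (c) only after first multiplying $D_3$ by $g = x^{m-1}y^{n-1}$ to force ${\rm div}(gD_3) = 0$, then using the identity $R[x,y]^{D_3} = R[x,y]^{gD_3}$. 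You instead exhibit the generators and verify $D(f) = 0$ by a one-line computation, which is shorter and avoids the auxiliary derivation $gD_3$ in (c) entirely; what your route gives up is the demonstration of Lemma \ref{lem:3-1} as a systematic device for \emph{finding} kernel elements, which is one of the points the paper is advertising and which matters precisely when the generator cannot be guessed in advance. Your treatment of (a) is a fleshed-out version of the paper's ``Obvious,'' and your explicit check in (b) that $c(f) = \gcd\bigl(b(m+1), a(n+1)\bigr)$ is a unit --- using $\Q \subseteq R$ to invert $m+1$, $n+1$ and the hypothesis $\gcd(a,b) = 1$ --- makes precise a step the paper leaves implicit when it invokes Lemma \ref{lem:1-3}.
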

\begin{proof}
{\rm \bf (a)} Obvious. 

{\rm \bf (b)}
Since $\dg (D_2) = 0$, by Lemma \ref{lem:3-1}, there exists $f \in R[x, y] \setminus R$ such that $D_2(f) = 0$. By the proof of Lemma \ref{lem:3-1}, we can write $f$ as 
	\begin{center}
	  $\displaystyle f = \frac{1}{n + 1}bx^{n + 1} - \frac{1}{m + 1}ay^{m + 1}$. 
	\end{center}
By Lemma \ref{lem:1-3}, we see that $Q(R)[f] \cap R[x, y] = R[f]$. Moreover, we can check easily that ${\rm gcd}(f_x, f_y) = 1$. By Proposition \ref{prop:2-2}, $f$ is a closed polynomial, also $(m + 1)(n + 1)f$ is a closed polynomial. Thus $R[x, y]^{D_2} = R[(m + 1)(n + 1)f]$.  

{\rm \bf (c)} 
Let $g = x^{m -1}y^{n - 1} \in R[x, y]$. Then $\dg (gD_3) = 0$. By Lemma \ref{lem:3-1}, we can construct a polynomial $h \in R[x, y]$ by $h = - x^my^n$. Then $gD_3(h) = 0$ and $Q(R)[h] \cap R[x, y] = R[h]$. Since $m$ and $n$ are relatively prime, by Example \ref{ex:2-1}, $h$ is a closed polynomial. Thus $R[x, y]^{D_3} = R[x, y]^{gD_3} = R[h] = R[x^my^n]$. 
\end{proof}

Next, we show the following lemma. This gives some types of derivations whose kernel has only constant polynomials. 
\begin{lem} \label{lem:3-5}
For $g \in R[x, y] \setminus \{ 0 \}$, let $D = \partial_x + g\partial_y$. If $\deg_y g \geq 1$, then $R[x,y]^D = R$.
\end{lem}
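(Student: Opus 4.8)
The plan is to argue by contradiction using two successive degree counts. Suppose some $h \in R[x,y]^D \setminus R$ exists. Since $D(x) = 1$ and $D(y) = g$, the kernel condition reads $D(h) = h_x + g\,h_y = 0$, equivalently $h_x = -g\,h_y$. The guiding idea is that, because $g$ has positive degree in $y$, the term $g\,h_y$ is forced to carry higher $y$-degree than $h_x$ unless strong cancellation occurs, and I will show that such cancellation is impossible.

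First I would dispose of the case $\deg_y h = 0$. Then $h \in R[x] \setminus R$, so $h_y = 0$ and $D(h) = h_x$; since $\Q \subseteq R$ and $h$ is a non-constant polynomial in $x$, we have $h_x \neq 0$, so $D(h) \neq 0$, contradicting $h \in R[x,y]^D$. So I may assume $d := \deg_y h \geq 1$ and write $a_d(x) \in R[x] \setminus \{0\}$ for the leading coefficient of $h$ in $y$; likewise set $e := \deg_y g \geq 1$ with leading $y$-coefficient $b_e(x) \neq 0$. Note that $h_x$ has $y$-degree at most $d$, while $h_y$ has $y$-degree exactly $d-1$ with leading coefficient $d\,a_d$, which is nonzero because $d$ is a nonzero element of $R$ and $R[x]$ is a domain.

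The main case split is on $e$. If $e \geq 2$, then the leading term in $y$ of $g\,h_y$ is $d\,a_d\,b_e\,y^{\,d+e-1}$, which is nonzero; its $y$-degree $d+e-1$ exceeds $d \geq \deg_y h_x$, so it cannot be cancelled in $h_x + g\,h_y$, giving $D(h) \neq 0$. The \emph{delicate} case is $e = 1$, where $g = b_1(x)\,y + b_0(x)$ with $b_1 \neq 0$; here $g\,h_y$ has $y$-degree exactly $d$, so the naive count is inconclusive. I would then extract the coefficient of $y^d$ in $D(h)$, namely $a_d' + d\,a_d\,b_1$, and use that it must vanish, i.e. $a_d' = -d\,a_d\,b_1$. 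Now I run a second degree count, this time in $x$: the right-hand side is a nonzero element of $R[x]$ of $x$-degree $\deg_x b_1 + \deg_x a_d \geq \deg_x a_d$, whereas the left-hand side $a_d'$ has $x$-degree strictly below $\deg_x a_d$ (or vanishes). Two equal polynomials cannot have these degrees, so this is a contradiction, completing the proof that $R[x,y]^D = R$.

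The step I expect to be the main obstacle is precisely the case $\deg_y g = 1$: there the first ($y$-degree) comparison degenerates, and one is forced to descend to the leading $y$-coefficient and perform a second comparison in the $x$-variable. Throughout, the hypotheses that $R$ is an integral domain and that $\Q \subseteq R$ are essential: the former to conclude that products of nonzero polynomials are nonzero, and the latter both to keep the integer factor $d$ nonzero in $R$ and to guarantee that differentiation in $x$ lowers $x$-degree by exactly one.
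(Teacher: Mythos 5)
Your proof is correct and follows essentially the same route as the paper: write $h$ by its $y$-degree with coefficients in $R[x]$, split into the cases $\deg_y g \geq 2$ (where the top $y$-coefficient $d\,a_d\,b_e$ of $g\,h_y$ cannot cancel) and $\deg_y g = 1$ (where the vanishing of the $y^d$-coefficient gives $a_d' + d\,a_d\,b_1 = 0$ and an $x$-degree comparison yields the contradiction). The only cosmetic difference is that you dispose of $\deg_y h = 0$ separately and phrase both cases as contradictions, whereas the paper derives $s=0$ and then concludes $h \in R$; the substance is identical.
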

\begin{proof} 
Let $g = b_0y^{\deg_yg} +$ (the lower $y$-degree terms), for $b_0 \in R[x] \setminus \{ 0 \}$. 
We take any element $h \in R[x, y] \setminus \{ 0 \}$ and put
	\begin{center}
	  $h = a_0 y^s + a_1 y^s + \cdots + a_{s-1} y + a_s$,
	\end{center}
where $s = \deg_y h (\geq 0)$, $a_0, \ldots, a_s \in R[x]$ and $a_0 \neq 0$. Then
	\begin{center}
	  $D(h) = (D(a_0)y^s + \cdots + D(a_s)) + g(sa_0y^{s-1} + \cdots + a_{s-1})$
	\end{center}
Since $\deg_y g \geq 1$, we have $s \leq s - 1 + \deg_y g$. 

Now, we suppose that $D(h) = 0$. If $s < s - 1 + \deg_yg$, then by comparing the coefficients of $y^s$ in the equation $D(h) = 0$, we obtain the equality $a_0b_0s = 0$, so $s = 0$. Then $0 = D(h) = D(a_0) = \partial_x(a_0)$, hence $h = a_0 \in R$. 
On the other hand, if $s = s - 1 + \deg_yg$, then we obtain the equality $D(a_0) + a_0b_0s = 0$. Since $\deg_x D(a_0) < \deg_x a_0 \leq \deg_xa_0b_0$, we have $s = 0$. Hence $h = a_0 \in R$. 
\end{proof}

Now, we shall prove Theorem \ref{thm:3-3}. 
\begin{proof}[Proof of Theorem \ref{thm:3-3}]
From now on, we assume that $D$ is none of (1)--(3) of Theorem \ref{thm:3-3} and prove that $R[x,y]^D = R$. Let $K := Q(R)$. We denote $D_K$ by the $K$-derivation on $K[x, y]$ which is the extension of $D$. To prove $R[x,y]^D = R$, it is sufficient to show that $K[x, y]^{D_K} = K$. Therefore we enough to show that for the following $K$-derivation $D$, the kernel of that is equal to $K$: 
	\begin{center} 
	  $D = x^m \partial_x + ay^n \partial_y$,
	\end{center}
where $a \in k^*$, $m,n \in \Z_{\geq 0}$. If $m = 0$ and $n \geq 1$, then $D$ is the form in Lemma \ref{lem:3-5}. So we already know that the kernel of it is $K$. Therefore we may assume that $n \geq m \geq 1$. Let $d$ be the greatest common divisor of $m - 1$ and $n - 1$ as integers,  $m' := (m - 1)/d$ and $n' := (n - 1)/d$,  here we assume $m' = n' =1$ if $m = n = 1$. We set ${\bf w} := (n', m')$ and consider the ${\bf w}$-grading on $K[x, y]$. Then we can easily check that if $f \in K[x, y]$ is ${\bf w}$-homogeneous then so is $D(f)$. 

Let $f$ be any non-zero element of $K[x, y]^D$. In order to prove $K[x, y]^D = K$, we may assume that $f$ is ${\bf w}$-homogeneous. 
Then we have $(\alpha_0, \beta_0) \in (\Z_{\geq 0})^2$ such that
	\begin{center}
	  $f = \displaystyle \sum_{\substack{i\ge 0\\\beta_{0}-in'\ge 0}}c_{i}x^{\alpha_{0}+im'}y^{\beta_{0}-in'}, \quad \quad \quad (*)$
	\end{center}
where $c_i \in K$. Since $f \in K[x,y]^D$, we have
	\begin{align*}
	  0 & = D(f)\\
	  & = \sum_{\substack{i\ge 0\\\beta_{0}-in'\ge 0}}c_{i}\left((\alpha_{0} + im')x^{\alpha_i}y^{\beta_{0} - in'} + (\beta_{0} - in')ax^{\alpha_{0} + im'}y^{\beta_i} \right), \quad \quad \quad (**)
	\end{align*}
where $\alpha_i = \alpha_{0} + im' + m - 1$ and $\beta_i = \beta_{0} - jn' + n -1$. 
Here we set the following subsets $A$ and $B$ of $(\Z_{\geq 0})^2$:
	\begin{center}
	  $A := \{(\alpha_{0} + im' + m - 1, \beta_{0} - in')\mid i\ge 0, \beta_{0}-in'\ge 0\}$,
	\end{center}
	\begin{center}
	  $B := \{(\alpha_{0} + jm', \beta_{0} - jn' + n -1)\mid j\ge 0, \beta_{0}-jn'\ge 0\}$.
	\end{center}
Suppose that $A \cap B = \emptyset$. Then, by taking $i=0$ in $A$, we see from $(**)$ that $c_0 \alpha_0 = 0$. So, $\alpha_0 = 0$. Similarly, we have $\beta_0 = 0$. Hence $f = c_0 x^{\alpha_0} = c_0 \in K$.

Suppose that $A \cap B \not= \emptyset$. Then there exist $i, j \in \Z_{\geq 0}$ such that 
	\begin{center}
		$
		  \begin{array}{l}
		    \alpha_{0} + im' + m - 1 = \alpha_{0} + jm', \\
		    \beta_{0} - in' = \beta_{0} - jn' + n - 1, \\
		    \beta_0- in' \geq 0, \\
		    \beta_0- jn' \geq 0.
		  \end{array}
		$
	\end{center}
Then $(j - i)m' = m - 1$ and $(j - i) n' = n - 1$. Here we may assume that $j \geq i$. Then $j - i = d$. We consider the cases $n \geq 2$ and $n = 1$ separately.

\vspace{1mm}
\noindent
Case: $n \geq 2$. Then $j > i$. By considering the term $i = 0$ in $(**)$, we have $c_0 \beta_0 a = 0$. So $\beta_0 = 0$. 
Since $f = c_0 x^{\alpha_0} \in K[x, y]^D$, we have $\alpha_0 = 0$. Therefore, $f = c_0 \in K$. 

\vspace{1mm}
\noindent
Case: $n = 1$. Then $m' = n' = 1$ and so $i = j$. By $(**)$, we have
	\begin{center}
	  $
	    \begin{array}{l}
	      c_{0}\alpha_{0} + c_{0}\beta_{0}a = 0, \\
	      c_{1}(\alpha_{0}+1) + c_{1}(\beta_{0}-1)a = 0, \\
	      ~~~\vdots\\
	      c_{\beta_{0}-1}(\alpha_{0}+\beta_{0}-1) + c_{\beta_{0}-1}a = 0, \\
	      c_{\beta_{0}}(\alpha_{0}+\beta_{0}) + c_{\beta_{0}}a = 0.
	    \end{array}
	    $
	\end{center}
Since $c_0 \not= 0$, we have $\alpha_0 + \beta_0 a = 0$. If $\beta_0 > 0$, then $a = - \alpha_{0}/\beta_{0} \in \Q_{<0}$. So $D$ is (3) of Theorem \ref{thm:3-3}. If $\beta_0 =0$, then $\alpha_0 = 0$ and hence $f \in K$. 
\end{proof} 

We note here that the condition ``$R$ is a UFD" is necessary. Even if $D$ is a monomial derivation, the kernel may not be finitely generated over $R$ in the case where $R$ is not a UFD.  We give an example below: 
\begin{example} \label{ex:3-6} 
{\rm (cf.\ \cite[Example 4.4]{EMS136})} 
{\rm 
Let $k$ be a field of characteristic zero and let $k[t]$ be the polynomial ring in one variable. Let $R = k[t^2, t^3]$. Here, we define an $R$-derivation $D$ on $R[x, y]$ by 
	\begin{center}
	  $D = \displaystyle t^2\frac{\partial}{\partial x} + t^3\frac{\partial}{\partial y}$. 
	\end{center}
Then $D$ is a monomial derivation, but $R[x, y]^D = R[f^mt^2 \ | \ m \geq 1]$, where $f = tx - y \notin R[x, y]$. Therefore the kernel of this derivations is not generated by one polynomial, in particular, it needs infinite generators.  
}
\end{example}
\section{The kernel of a monomial derivation on $k(x,y)$}
Let $k[x,y]$ be the polynomial ring in two variables over a field $k$ of characteristic zero and $k(x,y)$ its quotient field.  Recall that for a $k$-derivation $D$ on $k[x,y]$, we denote the same notation $D$ by the $k$-derivation on $k(x, y)$ which is the natural extension of the original $D$, and its kernel is denoted by $k({\rm X})^D$. In this section, by using the argument in \cite[\S 5]{NZ06} and Theorem \ref{thm:3-3}, we determine the non-zero monomial derivations $D$ on $k[x,y]$ such that $Q(k[x,y]^D) \not= k(x,y)^D$. 

Let $D$ be a monomial $k$-derivation on $k[x,y]$. In order to study $k(x,y)^D$, by switching the role of $x$ and $y$, we may assume that the following conditions are satisfied:
\begin{itemize}
\item[(i)]
$D(x)$ is monic.
\item[(ii)]
$\operatorname{gcd}(D(x), D(y)) = 1$.
\item[(iii)]
$\deg D(x) \leq \deg D(y)$ provided $D(y) \not= 0$.
\end{itemize} 
For the following discussions, we denote also $\partial/\partial x$ (resp. $\partial/\partial y$) by $\partial_x$ (resp. $\partial_y$). The following is the main result in this section. 
\begin{thm} \label{thm:4-1}
Let $D$ be a non-zero monomial $k$-derivation on the polynomial ring $k[x,y]$ in two variables over a field $k$ of characteristic zero. Assume that $D$ satisfies the above three conditions {\rm (i)--(iii)}, $k[x,y]^D = k$ and $k(x,y)^D \not= k$. Then $D$ is one of the following {\rm (1)--(3)}.
	\begin{itemize}
	  \item[(1)]
	  $D = \partial_x + ax^m y^{n+1} \partial_y$, where $m \in \Z_{\geq 0}$, $n \in \Z_{>0}$ and $a \in k^*$. 
	  \item[(2)]
	  $D = x^{m+1} \partial_x + ay^{n+1} \partial_y$, where $m, n \in \Z_{> 0}$ with $m \leq n$ and $a \in k^*$. 
	  \item[(3)]
	  $D = x \partial_x + ay \partial_y$, where $a$ is a positive rational number. 
	\end{itemize}
\end{thm}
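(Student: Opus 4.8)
The plan is to parametrize $D$ by the exponents of its two monomial values, to compute $k(x,y)^{D}$ by reducing $D$ (up to a nonzero rational factor) to a diagonal monomial derivation and solving the associated separable ODE, and then to intersect the resulting list of ``rationally nonconstant'' derivations against the condition $k[x,y]^{D}=k$, which via Theorem \ref{thm:3-3} tells us precisely which monomial $D$ fail to have a polynomial constant. First I would write $D(x)=x^{p}y^{q}$ and $D(y)=d\,x^{r}y^{s}$ with $p,q,r,s\in\Z_{\ge 0}$ and $d\in k$, using that $D(x)$ is monic. I would note $d\neq 0$: otherwise $D(y)=0$ gives $y\in k[x,y]^{D}$, contradicting $k[x,y]^{D}=k$. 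Condition (ii) then forces $\min(p,r)=0$ and $\min(q,s)=0$. Set $M:=p-r$ and $N:=s-q$.

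Since multiplying a derivation by a nonzero element of $k(x,y)$ leaves its kernel in $k(x,y)$ unchanged, I would replace $D$ by $hD$ with $h=x^{-r}y^{-q}$ to obtain the reduced derivation $\tilde D=x^{M}\partial_x+d\,y^{N}\partial_y$, which has the same rational constants as $D$. Its trajectories satisfy the separable equation $x^{-M}\,dx=d^{-1}y^{-N}\,dy$. When $M\neq 1$ and $N\neq 1$, integration produces the Laurent first integral $\phi=(1-N)d\,x^{1-M}-(1-M)y^{1-N}$, which is nonconstant, so $k(x,y)^{D}\neq k$. When $M=N=1$, i.e.\ $\tilde D=x\partial_x+d\,y\partial_y$, a nonconstant rational constant exists exactly when $d\in\Q$ (then $x^{a}y^{b}$ with $a+bd=0$ works). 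In every remaining configuration exactly one of the two integrations yields a logarithm, and here I would invoke \cite[\S 5]{NZ06} to conclude that no nonconstant rational constant exists, i.e.\ $k(x,y)^{D}=k$. Consequently the hypothesis $k(x,y)^{D}\neq k$ forces either (I) $M\neq 1$ and $N\neq 1$, or (II) $M=N=1$ and $d\in\Q$.

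In case (II), $M=N=1$ together with $\min(p,r)=\min(q,s)=0$ forces $(p,q,r,s)=(1,0,0,1)$, so $D=x\partial_x+d\,y\partial_y$ with $d\in\Q^{*}$; if $d<0$, writing $d=-a/b$ with $a,b\in\Z_{>0}$ gives $D(x^{a}y^{b})=0$, contradicting $k[x,y]^{D}=k$ (up to a nonzero scalar this $D$ is the derivation of Theorem \ref{thm:3-3}(3), whose kernel is nontrivial by Lemma \ref{lem:3-4}(c)), so $d\in\Q_{>0}$ and $D$ is of type (3). In case (I), I put $u:=1-M=r-p+1$ and $v:=1-N=q-s+1$, both nonzero, and note that if $u>0$ and $v>0$ then $\phi$ above is a nonconstant polynomial constant of $D$, contradicting $k[x,y]^{D}=k$; hence $u<0$ or $v<0$. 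If $p=0$ then $u=r+1>0$, forcing $v<0$, and then $\min(q,s)=0$ with $s\ge q+2$ gives $q=0$, $s\ge 2$, so $D=\partial_x+d\,x^{r}y^{s}\partial_y$ is of type (1) with $m=r$, $n=s-1$. If $p\ge 1$ then $\min(p,r)=0$ gives $r=0$, so $u=1-p$, and $u\neq 0$ forces $p\ge 2$, hence $u<0$; were $v>0$ we would get $s=0$ and $\deg D(y)=0<p\le\deg D(x)$, contradicting (iii), so $v<0$, which again yields $q=0$, $s\ge 2$, and $D=x^{p}\partial_x+d\,y^{s}\partial_y$ is of type (2) with $m=p-1\ge 1$, $n=s-1\ge 1$, where (iii) gives $m\le n$. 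This exhausts all cases.

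The routine part is the explicit construction of $\phi$ and the exponent bookkeeping under (i)--(iii), the coprimality $\gcd(D(x),D(y))=1$, and $k[x,y]^{D}=k$. The crux, and the step I expect to be the main obstacle in a self-contained treatment, is the non-constructive half of the rational-constant criterion: the assertion that the logarithmic configurations ($M=1$ or $N=1$, and $M=N=1$ with $d\notin\Q$) admit no nonconstant rational constant. This is exactly the monomial-derivation analysis of \cite[\S 5]{NZ06}; granting it, the combinatorics above pins $D$ down to the three listed forms.
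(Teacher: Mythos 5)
Your reduction and bookkeeping are sound, and at that level your argument matches what the paper does implicitly when it assembles Theorem \ref{thm:4-1} from Theorem \ref{thm:3-3} and Lemmas \ref{lem:4-2}--\ref{lem:4-3}: multiplying $D$ by the Laurent monomial $x^{-r}y^{-q}$ to reach the diagonal form $x^{M}\partial_x + d\,y^{N}\partial_y$, writing down the explicit Laurent first integral when $M\neq 1$ and $N\neq 1$, and then using (i)--(iii), $\gcd(D(x),D(y))=1$ and $k[x,y]^{D}=k$ to force the three normal forms is all correct (your exhaustion of the exponent cases, including the exclusion of $s=0$ via (iii) and of $d\in\Q_{<0}$ via a monomial constant, checks out). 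Indeed this is a somewhat cleaner organization than the paper's, which instead treats the two families $\partial_x+ax^{m}y^{n+1}\partial_y$ and $x^{m+1}\partial_x+ay^{n+1}\partial_y$ separately.

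The gap is the step you defer: the assertion that $k(x,y)^{D}=k$ in the logarithmic configurations (exactly one of $M,N$ equal to $1$) and when $M=N=1$ with $d\notin\Q$. That assertion is not an available citation; it is the actual mathematical content of the paper's proof, namely Lemmas \ref{lem:4-2} and \ref{lem:4-3}, which occupy essentially all of Section 4. You cannot invoke \cite[\S 5]{NZ06} as a black box, because the monomial derivations analyzed there are monic, while here the coefficient $d\in k^{*}$ is essential: the paper even flags the deviation in its definition (``a monomial may not be monic''), and it imports from \cite{NZ06} only Propositions 2.4 and 2.5, redoing the \S 5-style analysis itself. The dependence on $d$ is not cosmetic. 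In the case $M=N=1$ the dichotomy is precisely $d\in\Q$ versus $d\notin\Q$, which no statement about coefficient-one derivations can decide; and in the logarithmic cases $d$ can be scaled to $1$ only after passing to $\ol{k}$, which requires a descent remark ($\ol{k}(x,y)^{D}=\ol{k}$ implies $k(x,y)^{D}=k$) that you do not give. To close the gap you must carry out the Darboux-polynomial analysis the paper does: show that every Darboux polynomial of $\partial_x+ax^{m}y\,\partial_y$ is of the form $cy^{s}$ (Lemma \ref{lem:4-2}) and that every Darboux polynomial of $x\partial_x+ay\,\partial_y$ with $a\notin\Q$ is a monomial $bx^{i}y^{j}$ (Lemma \ref{lem:4-3}), conclude in both cases via \cite[Proposition 2.5]{NZ06}, and transfer the remaining logarithmic family $x^{m+1}\partial_x+ay\,\partial_y$, $m\geq 1$, to the first case by the conjugation $D=-x^{m-1}\sigma D_{1}\sigma^{-1}$, where $\sigma(x)=x^{-1}$, $\sigma(y)=y^{-1}$. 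Until those computations (or a reference genuinely covering non-monic coefficients) are supplied, the proposal is missing exactly the step you yourself identify as the crux.
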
 
\noindent 
Let $D$ be a $k$-derivation on $k[x,y]$. If $k[x,y]^D \not= k$, then $Q(k[x,y]^D) = k(x,y)^D$. See \cite[Theorem]{Z00}, which is generalized in \cite{AR02} and \cite{K11}. So Theorem \ref{thm:4-1} also gives the classification of the monomial $k$-derivations $D$ on $k[x,y]$ such that $Q(k[x,y]^D) \not= k(x,y)^D$. 

To prove Theorem \ref{thm:4-1} we show the following two lemmas. 
\begin{lem} \label{lem:4-2}
Let $D = \partial_x + ax^{m} y^{n+1}\partial_y$, where $m, n \in \Z_{\geq 0}$ and $a \in k^*$. Then $k(x, y)^D = k$ if and only if $n = 0$. 
\end{lem}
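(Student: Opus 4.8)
The plan is to prove the two implications of the biconditional separately, using the general principle that $k(x,y)^{D}$ has transcendence degree at most $1$ over $k$, so that it is either $k$ itself or a field of the form $k(\varphi)$ with $\varphi$ transcendental. Accordingly I will show that a non-constant rational invariant of $D$ exists precisely when $n\ge 1$, which is exactly the assertion of the lemma.

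For the ``only if'' part I argue in contrapositive form: if $n\ge 1$ then $k(x,y)^{D}\ne k$, and this I settle by exhibiting an explicit invariant. Integrating the characteristic equation $dx = dy/(ax^{m}y^{n+1})$ suggests the rational function
\[
  \varphi := a n\, x^{m+1} + (m+1)\, y^{-n}\in k(x,y),
\]
and a one-line computation gives $D(\varphi) = an(m+1)x^{m} - an(m+1)x^{m} = 0$. Since $n\ge 1$ and $a\ne 0$, the element $\varphi$ is non-constant, so $k(x,y)^{D}\supsetneq k$.

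For the ``if'' part, namely $n=0\Rightarrow k(x,y)^{D}=k$, the derivation is $D=\partial_{x}+ax^{m}y\,\partial_{y}$, and here the argument runs through Darboux polynomials. Suppose $f=P/Q\in k(x,y)^{D}$ with $\gcd(P,Q)=1$. From $D(P)Q = PD(Q)$ together with coprimality one gets $P\mid D(P)$ and $Q\mid D(Q)$, so both $P$ and $Q$ are Darboux polynomials of $D$, necessarily with a common cofactor. The heart of the matter is then to classify the irreducible Darboux polynomials: writing a candidate $g=\sum_{i=0}^{d}g_{i}(x)y^{i}$ with $g_{d}\ne 0$ and comparing the coefficient of each power of $y$ in the relation $D(g)=\lambda g$, the top-degree relation forces $g_{d}\in k$ and $\lambda = a d\,x^{m}$, and then each lower coefficient satisfies $g_{i}'(x)=a(d-i)x^{m}g_{i}(x)$, whose only polynomial solution for $i<d$ is $g_{i}=0$ (the two sides have incompatible $x$-degrees unless $g_{i}=0$). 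Hence $g=g_{d}y^{d}$, so every irreducible Darboux polynomial is a scalar multiple of $y$; note that $D(x)=1$ already rules out any non-constant factor lying in $k[x]$. Consequently $P$ and $Q$ are monomials in $y$, whence $f = c\,y^{j}$ for some $c\in k^{*}$ and $j\in\Z$, and $D(f)=caj\,x^{m}y^{j}=0$ forces $j=0$; thus $f\in k$ and $k(x,y)^{D}=k$.

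I expect the main obstacle to be the classification of irreducible Darboux polynomials in the case $n=0$, which requires the coefficient-by-coefficient degree analysis in $y$ sketched above together with the standard fact that each irreducible factor of a Darboux polynomial is itself a Darboux polynomial (seen by noting that $D(g)/g=\sum e_{i}D(g_{i})/g_{i}$ can be a polynomial only if each $g_{i}$ divides $D(g_{i})$). The remaining steps---the coprime reduction yielding $P\mid D(P)$ and $Q\mid D(Q)$, and the final monomial computation---are routine, and the $n\ge 1$ direction reduces to the single verification that $D(\varphi)=0$.
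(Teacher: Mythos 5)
Your proof is correct and follows essentially the same route as the paper: the same explicit invariant for $n \geq 1$ (your $anx^{m+1}+(m+1)y^{-n}$ is a scalar multiple of the paper's $nx^{m+1}+(m+1)a^{-1}y^{-n}$), and for $n=0$ the same strategy of classifying the Darboux polynomials of $\partial_x + ax^m y\,\partial_y$ as constants times powers of $y$ by comparing coefficients of powers of $y$. The only difference is one of packaging: where the paper invokes \cite[Propositions 2.4 and 2.5]{NZ06} (that irreducible factors of Darboux polynomials are Darboux, and the passage from this classification to $k(x,y)^D = k$), you prove both in-line via the logarithmic-derivative argument and the coprime numerator/denominator reduction followed by the direct check that $D(cy^j) \neq 0$ for $j \neq 0$, making your proof self-contained.
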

\begin{proof}
If $n \geq 1$, then $n x^{m+1} + (m+1)a^{-1} y^{-n} \in k(x, y)^D \setminus k$. We assume that $n = 0$. By Lemma \ref{lem:3-5}, $k[x, y]^D = k$. Let $f \in k[x, y] \setminus k$ be a non-constant polynomial and put
	\begin{center}
	  $f = a_s y^s + a_{s-1} y^s + \cdots + a_1 y + a_0$,
	\end{center}
where $s = \deg_y f (\geq 0)$, $a_0, \ldots, a_s \in k[x]$ and $a_s \not= 0$. Assume that $g:= D(f)/f \in k[x, y]$, namely, $f$ is a Darboux polynomial of $D$. 

Assume further that $a_0 \not= 0$, i.e., $y \not| f$. Since $f$ is non-constant and $D(f) = gf$, $g \not= 0$. We have
	\begin{center}
	  $s \geq \deg_y D(f) = \deg_y g + \deg_y f = \deg_y g +s$.
	\end{center}  
This implies $g \in k[x]$. Comparing the constant terms with respect to $y$ in the equation $D(f) = gf$, we have $a'_0 = ga_0$, where $a'_0$ is the derivative of $a_0$ with respect to $x$, which is a contradiction. Hence $a_0 = 0$.

The argument in the previous paragraph implies that $f$ can be expressed as $f = f_1 y^t$, where $t \in \Z_{> 0}$, $f_1 \in k[x, y]$ and $y \not| f_1$. By \cite[Proposition 2.4]{NZ06}, $f_1$ is also a Darboux polynomial of $D$ and so $f_1 \in k^*$. Therefore, $f$ can be expressed as $f = a_sy^s$, where $a_s \in k^*$. We infer from \cite[Proposition 2.5]{NZ06} that $k(x, y)^D = k$. 
\end{proof}

\begin{lem} \label{lem:4-3} 
Assume that $D = x^{m+1} \partial_x + ay^{n+1} \partial_y$, where $a \in k^*$, $m,n \in \Z_{\geq 0}$ and $m \leq n$, and that $k(x, y)^D \not= k$. Then one of the following conditions {\rm (1)} and {\rm (2)} holds true.
	\begin{itemize}
	  \item[(1)]
	  $m, n > 0$.
	  \item[(2)]
	  $m = n = 0$ and $a \in \Q \setminus \{ 0 \}$. 
	\end{itemize}
\end{lem}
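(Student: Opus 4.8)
The plan is to prove the contrapositive. Since we are assuming $m \le n$, the pairs $(m,n)$ not already covered by conclusions (1) and (2) are exactly (a) $m = 0$ with $n \ge 1$ (and $a$ arbitrary), and (b) $m = n = 0$ with $a \notin \Q$. Thus it suffices to show that in each of these two cases one has $k(x,y)^D = k$, which contradicts the hypothesis $k(x,y)^D \ne k$ and therefore forces (1) or (2). In both excluded cases $m = 0$, so throughout I may take
\[
D = x\partial_x + a y^{n+1}\partial_y, \qquad n \ge 0,
\]
carrying the extra information that $a \notin \Q$ when $n = 0$.

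The key structural observation is that $D$ preserves the $x$-degree of each term, since $x\partial_x$ fixes $\deg_x$ while $ay^{n+1}\partial_y$ does not involve $x$. Grading $k[x,y]$ by $\deg_x$, I first classify the irreducible Darboux polynomials of $D$ and claim they are, up to a nonzero constant, exactly $x$ and $y$. Let $f$ be an irreducible Darboux polynomial with $D(f) = \Lambda f$; the cases $f \in k[x]$ and $f \in k[y]$ give $f \sim x$ and $f \sim y$ at once (the latter from the fact that $a y^{n+1}\tfrac{d}{dy}$ has only $y$ as an irreducible Darboux polynomial), so I may assume $\deg_x f \ge 1$ and $\deg_y f \ge 1$. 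Because $D$ preserves $x$-degree, $\deg_x D(f) = \deg_x f$, whence $\Lambda$ has $x$-degree $0$, i.e. $\Lambda = \Lambda(y) \in k[y]$. Writing $f = \sum_d x^d g_d(y)$ and comparing $x$-degree components of $D(f) = \Lambda f$ yields $a y^{n+1} g_d' = (\Lambda(y) - d)\,g_d$ for every $d$. For each $d$ with $g_d \ne 0$ the logarithmic derivative $g_d'/g_d = (\Lambda - d)/(ay^{n+1})$ then has its only pole at $y = 0$; since the logarithmic derivative of a polynomial has only simple poles located at its roots, every root of $g_d$ must be $0$, so $g_d = c_d y^{s_d}$. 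Substituting back gives $\Lambda(y) = d + a s_d y^{n}$, and as $\Lambda$ is a single fixed polynomial this pins down a unique admissible $d$ (comparing constant terms when $n \ge 1$, and using $a \notin \Q$ when $n = 0$). Hence $f = c\,x^d y^{s}$ is a monomial, and irreducibility forces $f \sim x$ or $f \sim y$.

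With the classification established I finish as follows. Let $f \in k(x,y)^D$ and write $f = P/Q$ with $P, Q \in k[x,y]$ coprime. From $D(f) = 0$ we get $D(P)/P = D(Q)/Q$, so $P$ and $Q$ are Darboux polynomials; by \cite[Proposition 2.4]{NZ06} all of their irreducible factors are Darboux polynomials, hence by the classification $P$ and $Q$ are monomials in $x$ and $y$. Thus $f = c\,x^i y^j$ is a Laurent monomial, and $D(f) = (i + a j y^{n})f = 0$ forces $i = 0$ together with $a j y^{n} = 0$: when $n \ge 1$ this gives $j = 0$, and when $n = 0$ the relation $i + aj = 0$ with $a \notin \Q$ again forces $i = j = 0$. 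Therefore $f \in k$, so $k(x,y)^D = k$ in both excluded cases, which is the desired contradiction.

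I expect the main obstacle to be the classification of irreducible Darboux polynomials in the second paragraph, specifically the two linked points that the cofactor $\Lambda$ is independent of $x$ and that the divisibility relation $a y^{n+1} g_d' = (\Lambda - d) g_d$ forces each $g_d$ to be a power of $y$. The bookkeeping that determines $d$ uniquely from $\Lambda(y) = d + a s_d y^{n}$ must be carried out slightly differently for $n \ge 1$ and for $n = 0$, and it is precisely the hypothesis $a \notin \Q$ in the case $n = 0$ that excludes the resonant diagonal derivations appearing in conclusion (2); everything else is routine degree and pole counting.
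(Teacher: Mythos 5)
Your proof is correct, and it takes a genuinely different route from the paper's. The paper splits the two excluded cases: for $m=n=0$ with $a\notin\Q$ it classifies the Darboux polynomials of $x\partial_x+ay\partial_y$ by expanding in powers of $y$ with coefficients in $k[x]$ and comparing $x$-degrees, then invokes \cite[Proposition 2.5]{NZ06} to pass from ``all Darboux polynomials are monomials'' to $k(x,y)^D=k$; for $m=0$, $n\geq 1$ it does not argue directly at all, but conjugates by the automorphism $\sigma(x)=x^{-1}$, $\sigma(y)=y^{-1}$ so as to reduce to Lemma \ref{lem:4-2}, whose proof is again a Darboux classification plus \cite[Proposition 2.5]{NZ06}. (Incidentally, the paper's printed identity $D=-x^{m-1}\sigma D_1\sigma^{-1}$ requires $D_1=\partial_x+ax^{m-1}y\partial_y$, not $ax^{m+1}y$, so your avoidance of that step sidesteps a typo as well.) You instead treat both excluded cases uniformly: since $m=0$ in either one, $D$ respects the $x$-grading, and your componentwise equation $ay^{n+1}g_d'=(\Lambda-d)g_d$ together with the logarithmic-derivative pole count pins the irreducible Darboux polynomials down to exactly $x$ and $y$; you then finish elementarily by writing an invariant as a coprime fraction $P/Q$, noting $P$ and $Q$ are Darboux, hence monomials, and computing $D(x^iy^j)$ directly. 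What this buys is self-containedness and uniformity in $n$: you need only the standard fact that factors of Darboux polynomials are Darboux (\cite[Proposition 2.4]{NZ06}), not Proposition 2.5 and not the conjugation trick; what the paper's route buys is brevity through reuse of Lemma \ref{lem:4-2} and of the cited machinery. One cosmetic point: $D$ preserving the $x$-degree of each term gives a priori only $\deg_x D(f)\leq \deg_x f$ (the top homogeneous component could in principle be killed), but that inequality already forces the cofactor $\Lambda$ into $k[y]$, which is all your argument uses.
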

\begin{proof}
If $D$ satisfies the condition (1) (resp.\ (2)), then $ma^{-1}y^{-n} - nx^{-m} \in k(x, y)^D \setminus k$ (resp.\ $x^p y^{-q}  \in k(x, y)^D \setminus k$, where $p$ and $q$ are relatively prime integers such that $a = p/q$). We consider the following cases separately.

\vspace{1mm}
\noindent
Case: $m = n = 0$ and $a \not\in \Q$. By Theorem \ref{thm:3-3}, $B^D = k$. Let $f \in R \setminus k$ be a non-constant polynomial and put
	\begin{center}
	  $f = a_s y^s + a_{s-1} y^s + \cdots + a_1 y + a_0$,
	\end{center}
where $s = \deg_y f (\geq 0)$, $a_0, \ldots, a_s \in k[x]$ and $a_s \not= 0$. Assume that $f$ is a Darboux polynomial and set $g = D(f)/f$. 

Assume further that $a_0 \not= 0$, i.e., $y \not| f$. 
Since $f$ is non-constant and $D(f) = gf$, $g \not= 0$. We have $\deg_y D(f) = \deg_y g + s$. Since $\deg_y D(f) \leq s$, $g \in k[x]$ and $xa'_0 = g a_0$. So, $n_0:= g = \deg_x a_0 \in \Z_{>0}$ and $a_0 = b x^{n_0}$ for some $b \in k^*$. 
Assume further that $s > 0$. Comparing the highest terms with respect to $y$ in the equation $D(f) = gf$, we have $x a'_s = (n_0 -sa) a_s$. Then $n_0 - sa = \deg_x a_s$ and so $a \in \Q$. This is a contradiction. Therefore, $s = 0$, i.e., $f = b x^{n_0}$. 

Assume next that $a_0 = 0$. We set as $f = f_1 y^t$, where $t \in \Z_{> 0}$, $f_1 \in B$ and $y \not| f_1$. Then $f_1$ is also a Darboux polynomial of $D$. So the argument in the previous paragraph implies that $f_1 = b x^{\deg_x f_1}$ for some $b \in k^*$. 

Therefore, $f$ can be expressed as $b x^i y^j$ for some $i, j \in \Z_{\geq 0}$ and $b \in k^*$. Since $a \not\in \Q$, we infer from \cite[Proposition 2.5]{NZ06} that $k(x, y)^D = k$. 

\vspace{1mm}
\noindent
Case: $n = 0$, $m \geq 1$. Set $D_1 = \partial_x + ax^{m+1}y \partial_y$, where $m$ and $a$ are the same as in $D$. By Lemma \ref{lem:4-2}, $k(x, y)^{D_1} = k$. Let $\sigma: k(x, y) \to k(x, y)$ be the $k$-automorphism defined by $\sigma(x) = x^{-1}$ and $\sigma(y) = y{-1}$. Then $D = -x^{m-1} \sigma D_1 \sigma^{-1}$. Hence $k(x, y)^D = k$.

\vspace{1mm}
\noindent
Case: $m = 0$, $n \geq 1$. By using the same argument as in the previous case, we have $k(x, y)^D = k$. 
The proof of Lemma \ref{lem:4-3} is thus verified. 
\end{proof}
Theorem \ref{thm:4-1} is a consequence of Theorem \ref{thm:3-3}, Lemmas \ref{lem:4-2} and \ref{lem:4-3}.


\end{document}